\theoremstyle{plain}
\newtheorem{thm}{Theorem}[section]
\numberwithin{equation}{section}
\numberwithin{figure}{section}  
\theoremstyle{plain}
\theoremstyle{plain}
\newtheorem{cor}[thm]{Corollary} 
\theoremstyle{plain}
\theoremstyle{plain}
\newtheorem{lem}[thm]{Lemma} 
\theoremstyle{plain}
\begin{document}
\title{Approximation by a Kantorovich type $q$-Bernstein-Stancu Operators}
\author{M. Mursaleen*, Khursheed J. Ansari and Asif Khan}
\thanks{*Corresponding author.}
\subjclass[2010]{41A10, 41A25, 41A36.}
\keywords{q-Bernstein-Stancu operators; rate of convergence; modulus of continuity; Voronovskaja type theorem.}
\address{Department of Mathematics, Aligarh Muslim University,
Aligarh 202002, India}
\email{mursaleenm@gmail.com; ansari.jkhursheed@gmail.com; asifjnu07@gmail.com}


\begin{abstract}
In this paper, we introduce a Kantorovich type generalization of $q$%
-Bernstein-Stancu operators. We study the convergence of the introduced
operators and also obtain the rate of convergence by these operators in
terms of the modulus of continuity. Further, we study local approximation
property and Voronovskaja type theorem for the said operators. We show
comparisons and some illustrative graphics for the convergence of operators
to a certain function.
\end{abstract}

\maketitle

\section{Introduction}

\parindent=8mmThe applications of $q$-calculus in the area of approximation
theory were initiated by Lupas \cite{lup}, who first introduced $q$%
-Bernstein polynomials. Later, Phillips \cite{phi} proposed other $q$-analog
of Bernstein polynomials which became very popular and several researchers
obtained the interesting approximation properties for $q$-Bernstein
polynomials. In the recent years, many researchers have studied the
approximation properties for linear positive operators \cite{pn, aral, gup,
gup1, vish, moh, Moh1, ma511, mkeg}. Mursaleen et al in \cite{mfa1, mfa2,
mksn, ma1} have also obtained statistical approximation properties for new
positive linear operators and some approximation theorems for generalized $q$%
-Bernstein-Schurer operators.

\parindent=8mm Initially, we start of with the basic definitions and
notations of quantum calculus \cite{kac}. Let $q>0$ be a fixed real number.
For any $n\in\mathbb{N}\cup\{0\}$, the $q$-integer $[n]=[n]_q$ is defined by

\begin{equation*}
\lbrack n]:=\left\{
\begin{array}{l}
\frac{(1-q^{n})}{(1-q)},~~~~~~q\neq 1 \\
~n,~~~~~~~~~~~q=1%
\end{array}%
\right.
\end{equation*}%
\newline
and $q$-factorial $[n]!=[n]_{q}!$ by
\begin{equation*}
\lbrack n]!:=\left\{
\begin{array}{l}
\lbrack n][n-1]\cdots \lbrack 1],~~~~~~~n\geq 1 \\
~1,~~~~~~~~~~~~~~~~~~~~~~~~~n=0.%
\end{array}%
\right.
\end{equation*}%
For integers $0\leqslant k\leqslant n$, the $q$-binomial coefficient is
defined by
\begin{equation*}
\left[
\begin{array}{c}
n \\
k%
\end{array}%
\right] :=\frac{[n]!}{[k]![n-k]!}.
\end{equation*}%
The $q$-analogue of integration in the interval $[0,A]$ is defined by
\begin{equation*}
\int_{0}^{A}f(t)d_{q}t:=A(1-q)\sum\limits_{n=0}^{\infty
}f(Aq^{n})q^{n},~~0<q<1.
\end{equation*}

\parindent=8mm In \cite{ber}, Bernstein introduced the following well-known
positive linear operators
\begin{equation}  \label{1.1}
B_n(f;x)=\sum\limits_{k=0}^{n}f\left(\frac kn\right){\binom{n}{k}}%
x^k(1-x)^{n-k}
\end{equation}
and he showed that if $f\in C[0,1]$, then $B_n(f;x)\rightrightarrows f(x)$
where $"\rightrightarrows"$ represents the uniform convergence. One can find
more details about the Bernstein polynomials in \cite{lor}. The $q$%
-generalization of the Bernstein polynomials was introduced by G.M. Philips
\cite{phi}.

\parindent=8mm The classical Kantorovich operator $B_n^*,n=1,2,\cdots$ is
defined by (cf. \cite{lor})
\begin{align*}
B_n^*(f;x)&=(n+1)\sum\limits_{k=0}^{n}{\binom{n}{k}}x^k(1-x)^{n-k}\int^{%
\frac{k+1}{n+1}}_{\frac{k}{n+1}} f(t)dt
\end{align*}
\begin{align}  \label{1.2}
~~~~~~~=\sum\limits_{k=0}^{n}{\binom{n}{k}}x^k(1-x)^{n-k}\int^{1}_{0} f%
\left(\frac{k+t}{n+1}\right)dt
\end{align}

\parindent=8mm Inspired by (1.2), Mahmudov \cite{mah} introduced a $q$-type
generalization of Bernstein-Kantorovich operators as follows:
\begin{equation}
B_{n,q}^{\ast }(f;x)=\sum\limits_{k=0}^{n}p_{n,k}(q;x)\int_{0}^{1}f\biggl{(}%
\frac{[k]+q^{k}t}{[n+1]}\biggl{)}d_{q}t  \label{1.3}
\end{equation}%
where
\begin{equation*}
p_{n,k}(q,x):=\left[
\begin{array}{c}
n \\
k%
\end{array}%
\right] x^{k}(1-x)_{q}^{n-k},~~(1-x)_{q}^{n}=\prod\limits_{s=0}^{n-1}\big(%
1-q^{s}x\big).
\end{equation*}%
It can be seen that for $q\rightarrow 1^{-}$ the $q$-Bernstein-Kantorovich
operator becomes the classical Bernstein-Kantorovich operator (1.2).

\parindent=8mm \parindent=8mm In 2010, Gadjiev and Gorbanalizadeh \cite{gad}
introduced the following construction of Bernstein-Stancu type polynomials
with shifted knots:

\begin{equation}  \label{1.4}
S_{n,\alpha,\beta}(f;x)=\biggl{(}\frac{n+\beta_2}{n}\biggl{)}%
^n\sum\limits_{r=0}^{n}f\biggl{(}\frac{r+\alpha_1}{n+\beta_1}\biggl{)}{%
\binom{n}{r}}\biggl{(}x-\frac{\alpha_2}{n+\beta_2}\biggl{)}^r\biggl{(}\frac{%
n+\alpha_2}{n+\beta_2}-x\biggl{)}^{n-r}
\end{equation}
where $\frac{\alpha_2}{n+\beta_2}\leqslant x\leqslant\frac{n+\alpha_2}{%
n+\beta_2}$ and $\alpha_k,\beta_k~(k=1,2)$ are positive real numbers
provided $0\leqslant\alpha_1\leqslant\alpha_2\leqslant\beta_1\leqslant\beta_2
$. It is clear that for $\alpha_2=\beta_2=0$, then polynomials (1.4) turn
into the Bernstein-Stancu polynomials (1.2) and if $\alpha_1=\alpha_2=%
\beta_1=\beta_2=0$ then these polynomials turn into the classical Bernstein
polynomials.

\parindent=8mm Motivated by (1.4), İçöz \cite{gur} introduced a Kantorovich
type generalization of Bernstein-Stancu polynomials as follows:
\begin{equation}  \label{1.5}
S_{n,\alpha,\beta}^*(f;x)=(n+\beta_1+1)\biggl{(}\frac{n+\beta_2}{n}\biggl{)}%
^n~\sum\limits_{r=0}^{n}{\binom{n}{r}}\biggl{(}x-\frac{\alpha_2}{n+\beta_2}%
\biggl{)}^r\biggl{(}\frac{n+\alpha_2}{n+\beta_2}-x\biggl{)}^{n-r} \int^{%
\frac{r+\alpha_1+1}{n+\beta_1+1}}_{\frac{r+\alpha_1}{n+\beta_1+1}} f(s)ds.
\end{equation}
where $\frac{\alpha_2}{n+\beta_2}\leqslant x\leqslant\frac{n+\alpha_2}{%
n+\beta_2}$ and $\alpha_k,\beta_k~(k=1,2)$ are positive real numbers
provided $0\leqslant\alpha_1\leqslant\alpha_2\leqslant\beta_1\leqslant\beta_2
$. It is clear that for $\alpha_1=\alpha_2=\beta_1=\beta_2=0$  then these
polynomials turn into the Bernstein-Kantorovich operators.

\section{Construction of new operators and some auxiliary results}

We construct a Kantorovich type $q$-Bernstein-Stancu type polynomials as
follows:
\begin{equation}
K_{n,q}^{(\alpha ,\beta )}=\biggl{(}\frac{[n]+\beta _{2}}{[n]}\biggl{)}%
^{n}~\sum\limits_{k=0}^{n}\left[
\begin{array}{c}
n \\
k%
\end{array}%
\right] \biggl{(}x-\frac{\alpha _{2}}{[n]+\beta _{2}}\biggl{)}_{q}^{k}~%
\biggl{(}\frac{[n]+\alpha _{2}}{[n]+\beta _{2}}-x\biggl{)}%
_{q}^{n-k}\int_{0}^{1}f\biggl{(}\frac{[k]+q^{k}t+\alpha _{1}}{[n+1]+\beta
_{1}}\biggl{)}d_{q}t  \label{2.1}
\end{equation}%
where $\frac{\alpha _{2}}{n+\beta _{2}}\leqslant x\leqslant \frac{n+\alpha
_{2}}{n+\beta _{2}}$ and $\alpha _{k},\beta _{k}~(k=1,2)$ are positive real
numbers provided $0\leqslant \alpha _{1}\leqslant \alpha _{2}\leqslant \beta
_{1}\leqslant \beta _{2}$. If we put for $\alpha _{1}=\alpha _{2}=\beta
_{1}=\beta _{2}=0$ in (2.1) then these polynomials turn into the
Bernstein-Kantorovich operators (1.3) introduced by Mahmudov. \parindent=8mm
Throughout this paper, $\Vert .\Vert $ denotes the sup-norm on [0,1].
Further, $C$ denotes the absolutely positive constant not necessarily the
same at each occurrence.

\parindent=8mm The aim of this paper is to study some approximation
properties of Kantorovich type $q$-Bernstein-Stancu operators defined by
(2.1). First, we prove the basic convergence of the introduced operators and
also obtain the rate of convergence by these operators in terms of the
modulus of continuity. Further, we study local approximation property and
Voronovskaja type theorem for the said operators. With the help of the
Matlab we show comparisons and some illustrative graphics for the
convergence of operators to a function. \newline

\begin{lem}
\label{2.1} Let $K^{(\alpha,\beta)}_{n,q}(f;x)$ be given by (2.1). Then the
following properties hold:\newline
(i)
\begin{equation*}
K^{(\alpha,\beta)}_{n,q}(1;x)=1;
\end{equation*}
(ii)
\begin{eqnarray}
K^{(\alpha,\beta)}_{n,q}(t;x)&=& \frac{[n]+\beta_2}{[n+1]+\beta_1}\frac{2q}{%
[2]}\left(x-\frac{\alpha_2}{[n]+\beta_2}\right)+\frac1{[n+1] +\beta_1}%
\left(\alpha_1+\frac1{[2]}\right)  \notag
\end{eqnarray}
(iii)
\begin{eqnarray}
K^{(\alpha,\beta)}_{n,q}(t^2;x)&=&\frac{q[n-1]}{[n]}\biggl{(}1+\frac{(q-1)^2%
}{[3]}+\frac{2(q-1)}{[2]}\biggl{)}\biggl{(}\frac{[n] +\beta_2}{[n+1]+\beta_1}%
\biggl{)}^2\biggl{(}x-\frac{\alpha_2}{[n]+\beta_2}\biggl{)}^2_q  \notag \\
&&+ \biggl{(}1+\frac{q^2-1}{[3]}+(2\alpha_1+1)\frac{2q}{1+q}\biggl{)} %
\biggl{(}\frac{[n]+\beta_2}{([n+1]+\beta_1)^2}\biggl{)}\biggl{(}x-\frac{%
\alpha_2}{[n]+\beta_2}\biggl{)}  \notag \\
&&+\frac1{([n+1]+\beta_1)^2}\biggl{(}\alpha_1^2+\frac{2\alpha_1}{[2]}%
+\frac1{[3]}\biggl{)}  \notag
\end{eqnarray}%
\newline
\end{lem}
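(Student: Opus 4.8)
The plan is to use the linearity of $K^{(\alpha,\beta)}_{n,q}$ and of the $q$-integral, reducing each moment to a weighted sum of the Stancu basis functions whose weights are elementary $q$-integrals. The only $q$-integrals needed are $\int_0^1 t^j\,d_qt=\frac{1}{[j+1]}$ for $j=0,1,2$; these follow at once from the definition in the excerpt, since with $A=1$ one gets $\int_0^1 t^j\,d_qt=(1-q)\sum_{m\ge 0}q^{m(j+1)}=\frac{1-q}{1-q^{j+1}}=\frac{1}{[j+1]}$, producing the constants $1$, $\frac{1}{[2]}$, $\frac{1}{[3]}$ that appear in the statement.

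For (i), since $\int_0^1 1\,d_qt=1$, the claim is the partition-of-unity property of the basis. Writing $u=x-\frac{\alpha_2}{[n]+\beta_2}$ and $v=\frac{[n]+\alpha_2}{[n]+\beta_2}-x$ I would note that $u+v=\frac{[n]}{[n]+\beta_2}$, and then invoke the $q$-binomial theorem for this basis,
\[
\left(\frac{[n]+\beta_2}{[n]}\right)^{n}\sum_{k=0}^{n}\binom{n}{k}_q (u)_q^{k}\,(v)_q^{n-k}=\left(\frac{[n]+\beta_2}{[n]}\right)^{n}\left(\frac{[n]}{[n]+\beta_2}\right)^{n}=1,
\]
where the inner identity is proved by induction on $n$ using the Pascal-type recursion $\binom{n}{k}_q=\binom{n-1}{k-1}_q+q^{k}\binom{n-1}{k}_q$. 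Thus the Stancu prefactor is exactly the normalization making $K^{(\alpha,\beta)}_{n,q}(1;x)=1$.

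For (ii) and (iii) I would substitute $f(t)=t$ and $f(t)=t^2$ into the argument $\frac{[k]+q^{k}t+\alpha_1}{[n+1]+\beta_1}$ and integrate in $t$ first. This gives the inner values $\frac{1}{[n+1]+\beta_1}\bigl([k]+\alpha_1+\frac{q^{k}}{[2]}\bigr)$ for the linear moment and $\frac{1}{([n+1]+\beta_1)^2}\bigl(([k]+\alpha_1)^2+\frac{2([k]+\alpha_1)q^{k}}{[2]}+\frac{q^{2k}}{[3]}\bigr)$ for the quadratic one. Summing against the basis reduces everything to the raw basis moments $\sum_k \tilde p_{n,k}\,[k]$, $\sum_k \tilde p_{n,k}\,q^{k}$, $\sum_k \tilde p_{n,k}\,[k]^2$, $\sum_k \tilde p_{n,k}\,[k]q^{k}$ and $\sum_k \tilde p_{n,k}\,q^{2k}$ (here $\tilde p_{n,k}$ is the full basis term including the prefactor). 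Each is a $q$-analogue of a classical shifted-knot Stancu moment, obtained from the same recursion together with $[k]\binom{n}{k}_q=[n]\binom{n-1}{k-1}_q$; the $q^{k}$- and $q^{2k}$-weighted sums are produced by shifting the product index in $(v)_q^{n-k}$, exactly as in Mahmudov's computation of the moments of \eqref{1.3}. (If the $q$-powers are read so that the affine substitution $X=\frac{([n]+\beta_2)x-\alpha_2}{[n]}$ carries $\tilde p_{n,k}$ to the Phillips basis $p_{n,k}(q;X)$, these raw moments can instead be quoted directly from Mahmudov's formulas for \eqref{1.3}.)

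The main obstacle is the second moment (iii): once the five raw sums are inserted the bookkeeping is heavy, and the delicate part is collecting the many $q$-integer fractions into the compact coefficients $\frac{q[n-1]}{[n]}\bigl(1+\frac{(q-1)^2}{[3]}+\frac{2(q-1)}{[2]}\bigr)$, $\bigl(1+\frac{q^2-1}{[3]}+(2\alpha_1+1)\frac{2q}{1+q}\bigr)$ and $\alpha_1^2+\frac{2\alpha_1}{[2]}+\frac{1}{[3]}$ of the statement. I would therefore organize the calculation by expressing each raw moment in powers of $\bigl(x-\frac{\alpha_2}{[n]+\beta_2}\bigr)$, then matching the $q$-deformed quadratic, linear and constant powers separately, simplifying scalar factors via $[n]=1+q[n-1]$, $[2]=1+q$ and $[3]=1+q+q^2$; the $q$-integral constants $\frac{1}{[2]}$, $\frac{1}{[3]}$ enter only at the final assembly and are precisely what generate the $(q-1)$-dependent corrections in the leading coefficient.
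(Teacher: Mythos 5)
Your plan follows the same skeleton as the paper's proof: integrate the inner $q$-integral first (your values $\int_0^1 t^j\,d_qt=\frac{1}{[j+1]}$, $j=0,1,2$, are exactly what the paper uses), treat (i) as the partition-of-unity property of the shifted basis, and reduce (ii)--(iii) to basis moments evaluated with $[k]\binom{n}{k}_q=[n]\binom{n-1}{k-1}_q$ and re-indexing. The substantive difference is in how the $q^k$ factors are eliminated, and here your write-up has a soft gap. The paper never forms your five raw sums: immediately after integrating (in (iii), even before integrating) it substitutes $q^{k}=1+(q-1)[k]$, hence $q^{2k}=\bigl(1+(q-1)[k]\bigr)^{2}$, so each bracket collapses to a quadratic polynomial in $[k]$ and only the three sums $\sum_k\tilde p_{n,k}$, $\sum_k\tilde p_{n,k}[k]$, $\sum_k\tilde p_{n,k}[k]^{2}$ are ever needed; one index shift plus $[k+1]=1+q[k]$ then finishes (iii), and the coefficients $1+\frac{(q-1)^{2}}{[3]}+\frac{2(q-1)}{[2]}$, $1+\frac{q^{2}-1}{[3]}+(2\alpha_1+1)\frac{2q}{[2]}$ and $\alpha_1^{2}+\frac{2\alpha_1}{[2]}+\frac{1}{[3]}$ appear automatically, with no final matching of powers of $\bigl(x-\frac{\alpha_2}{[n]+\beta_2}\bigr)$ required. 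By contrast, your stated mechanism for the $q^{k}$-, $[k]q^{k}$- and $q^{2k}$-weighted sums --- ``shifting the product index in $(v)_q^{n-k}$'' --- is not what produces them: index shifting of that type yields the $[k]$-moments, and Mahmudov's computation that you cite disposes of the $q^{k}$ factors by precisely the linearization $q^{k}=1+(q-1)[k]$. As written, your primary route stalls at those three weighted sums, though the repair is a single line.

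Your parenthetical alternative, on the other hand, is a genuinely different route from the paper and it does work: under the affine reading $X=\frac{([n]+\beta_2)x-\alpha_2}{[n]}$ of the basis, the Stancu prefactor cancels and $K^{(\alpha,\beta)}_{n,q}(f;x)=B^{*}_{n,q}(g;X)$ with $g(s)=f\bigl(\frac{[n+1]s+\alpha_1}{[n+1]+\beta_1}\bigr)$, so (ii) and (iii) follow by inserting Mahmudov's moment formulas for the operator (1.3) and undoing the two affine maps. This is shorter than the paper's direct computation; note also that the paper itself tacitly relies on this reading of the $q$-powers, since its proof sums degree-$(n-1)$ and degree-$(n-2)$ bases against the same knots $\frac{\alpha_2}{[n]+\beta_2}$ and $\frac{[n]+\alpha_2}{[n]+\beta_2}$ and still invokes normalization, which is only justified when the shifted basis is the Phillips basis in the variable $X$.
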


\begin{proof}
(i) Using binomial coefficient
\begin{eqnarray*}
K_{n,q}^{(\alpha ,\beta )}(1;x) &=&\biggl{(}\frac{[n]+\beta _{2}}{[n]}%
\biggl{)}^{n}~\sum\limits_{k=0}^{n}\left[
\begin{array}{c}
n \\
k%
\end{array}%
\right] \biggl{(}x-\frac{\alpha _{2}}{[n]+\beta _{2}}\biggl{)}_{q}^{k}~%
\biggl{(}\frac{[n]+\alpha _{2}}{[n]+\beta _{2}}-x\biggl{)}%
_{q}^{n-k}\int_{0}^{1}d_{q}t \\
&=&\biggl{(}\frac{[n]+\beta _{2}}{[n]}\biggl{)}^{n}~\sum\limits_{k=0}^{n}%
\left[
\begin{array}{c}
n \\
k%
\end{array}%
\right] \biggl{(}x-\frac{\alpha _{2}}{[n]+\beta _{2}}\biggl{)}_{q}^{k}~%
\biggl{(}\frac{[n]+\alpha _{2}}{[n]+\beta _{2}}-x\biggl{)}_{q}^{n-k}=1.
\end{eqnarray*}

(ii) For $f(t)=t$, we have
\begin{equation*}
K_{n,q}^{(\alpha ,\beta )}=\biggl{(}\frac{[n]+\beta _{2}}{[n]}\biggl{)}%
^{n}~\sum\limits_{k=0}^{n}\left[
\begin{array}{c}
n \\
k%
\end{array}%
\right] \biggl{(}x-\frac{\alpha _{2}}{[n]+\beta _{2}}\biggl{)}_{q}^{k}~%
\biggl{(}\frac{[n]+\alpha _{2}}{[n]+\beta _{2}}-x\biggl{)}%
_{q}^{n-k}\int_{0}^{1}\frac{[k]+\alpha _{1}+q^{k}t}{[n+1]+\beta _{1}}d_{q}t
\end{equation*}%
\begin{equation*}
K_{n,q}^{(\alpha ,\beta )}=\biggl{(}\frac{[n]+\beta _{2}}{[n]}\biggl{)}%
^{n}~\sum\limits_{k=0}^{n}\left[
\begin{array}{c}
n \\
k%
\end{array}%
\right] \biggl{(}x-\frac{\alpha _{2}}{[n]+\beta _{2}}\biggl{)}_{q}^{k}~%
\biggl{(}\frac{[n]+\alpha _{2}}{[n]+\beta _{2}}-x\biggl{)}_{q}^{n-k}%
\biggl{\{}\frac{[k]+\alpha _{1}}{[n+1]+\beta _{1}}+\frac{q^{k}}{%
[2]([n+1]+\beta _{1})}\biggl{\}}
\end{equation*}%
\begin{eqnarray*}
K_{n,q}^{(\alpha ,\beta )} &=&\frac{1}{[n+1]+\beta _{1}}\biggl{(}\frac{%
[n]+\beta _{2}}{[n]}\biggl{)}^{n}~\sum\limits_{k=0}^{n}\left[
\begin{array}{c}
n \\
k%
\end{array}%
\right] \biggl{(}x-\frac{\alpha _{2}}{[n]+\beta _{2}}\biggl{)}_{q}^{k}~%
\biggl{(}\frac{[n]+\alpha _{2}}{[n]+\beta _{2}}-x\biggl{)}_{q}^{n-k}%
\biggl{\{}\frac{2q}{[2]}[k]+\alpha _{1}+\frac{1}{[2]}\biggl{\}} \\
(\text{using}~q^{k} &=&1+(q-1)[k])
\end{eqnarray*}%
\begin{eqnarray*}
K_{n,q}^{(\alpha ,\beta )} &=&\frac{1}{[n+1]+\beta _{1}}\biggl{\{}\frac{2q}{%
[2]}\biggl{(}\frac{[n]+\beta _{2}}{[n]}\biggl{)}^{n}~\sum\limits_{k=1}^{n}%
\frac{[n]}{[k]}\left[
\begin{array}{c}
n-1 \\
k-1%
\end{array}%
\right] \biggl{(}x-\frac{\alpha _{2}}{[n]+\beta _{2}}\biggl{)}_{q}^{k}~%
\biggl{(}\frac{[n]+\alpha _{2}}{[n]+\beta _{2}}-x\biggl{)}_{q}^{n-k}[k] \\
&&+\biggl{(}\alpha _{1}+\frac{1}{[2]}\biggl{)}\biggl{(}\frac{[n]+\beta _{2}}{%
[n]}\biggl{)}^{n}~\sum\limits_{k=0}^{n}\left[
\begin{array}{c}
n \\
k%
\end{array}%
\right] \biggl{(}x-\frac{\alpha _{2}}{[n]+\beta _{2}}\biggl{)}_{q}^{k}~%
\biggl{(}\frac{[n]+\alpha _{2}}{[n]+\beta _{2}}-x\biggl{)}_{q}^{n-k}%
\biggl{\}} \\
&=&\frac{[n]}{[n+1]+\beta _{1}}\frac{2q}{[2]}\biggl{(}\frac{[n]+\beta _{2}}{%
[n]}\biggl{)}^{n}~\sum\limits_{k=0}^{n-1}\left[
\begin{array}{c}
n-1 \\
k%
\end{array}%
\right] \biggl{(}x-\frac{\alpha _{2}}{[n]+\beta _{2}}\biggl{)}_{q}^{k+1}~%
\biggl{(}\frac{[n]+\alpha _{2}}{[n]+\beta _{2}}-x\biggl{)}_{q}^{n-k-1}\frac{1%
}{[n+1]+\beta _{1}}\biggl{(}\alpha _{1}+\frac{1}{[2]}\biggl{)} \\
&=&\frac{[n]+\beta _{2}}{[n+1]+\beta _{1}}\frac{2q}{[2]}\biggl{(}x-\frac{%
\alpha _{2}}{[n]+\beta _{2}}\biggl{)}+\frac{1}{[n+1]+\beta _{1}}\biggl{(}%
\alpha _{1}+\frac{1}{[2]}\biggl{)}
\end{eqnarray*}

(iii) For $f(t)=t^{2}$, in view of (2.1), we have
\begin{eqnarray*}
K_{n,q}^{(\alpha ,\beta )} &=&\biggl{(}\frac{[n]+\beta _{2}}{[n]}\biggl{)}%
^{n}~\sum\limits_{k=0}^{n}\left[
\begin{array}{c}
n \\
k%
\end{array}%
\right] \biggl{(}x-\frac{\alpha _{2}}{[n]+\beta _{2}}\biggl{)}_{q}^{k}~%
\biggl{(}\frac{[n]+\alpha _{2}}{[n]+\beta _{2}}-x\biggl{)}%
_{q}^{n-k}\int_{0}^{1}\biggl{(}\frac{[k]+q^{k}t+\alpha _{1}}{[n+1]+\beta _{1}%
}\biggl{)}^{2}d_{q}t \\
&=&\frac{1}{([n+1]+\beta _{1})^{2}}\biggl{(}\frac{[n]+\beta _{2}}{[n]}%
\biggl{)}^{n}~\sum\limits_{k=0}^{n}\left[
\begin{array}{c}
n \\
k%
\end{array}%
\right] \biggl{(}x-\frac{\alpha _{2}}{[n]+\beta _{2}}\biggl{)}_{q}^{k}~%
\biggl{(}\frac{[n]+\alpha _{2}}{[n]+\beta _{2}}-x\biggl{)}%
_{q}^{n-k}\int_{0}^{1}\bigl{(}\lbrack k]+(1+(q-1)[k])t+\alpha _{1}\bigl{)}%
^{2}d_{q}t \\
&=&\frac{1}{([n+1]+\beta _{1})^{2}}\biggl{(}\frac{[n]+\beta _{2}}{[n]}%
\biggl{)}^{n}~\sum\limits_{k=0}^{n}\left[
\begin{array}{c}
n \\
k%
\end{array}%
\right] \biggl{(}x-\frac{\alpha _{2}}{[n]+\beta _{2}}\biggl{)}_{q}^{k}%
\biggl{(}\frac{[n]+\alpha _{2}}{[n]+\beta _{2}}-x\biggl{)}_{q}^{n-k}%
\biggl{\{}\biggl{(}1+\frac{(q-1)^{2}}{[3]}+\frac{2(q-1)}{[2]}\biggl{)}%
\lbrack k]^{2} \\
&&+\biggl{(}\frac{2(q-1)}{[3]}+2\alpha _{1}+\frac{2\alpha _{1}(q-1)}{[2]}+%
\frac{2}{[2]}\biggl{)}\lbrack k]+\alpha _{1}^{2}+\frac{2\alpha _{1}}{[2]}+%
\frac{1}{[3]}\biggl{\}} \\
&=&\frac{1}{([n+1]+\beta _{1})^{2}}\biggl{\{}\biggl{(}1+\frac{(q-1)^{2}}{[3]}%
+\frac{2(q-1)}{[2]}\biggl{)}\biggl{(}\frac{[n]+\beta _{2}}{[n]}\biggl{)}%
^{n}\sum\limits_{k=1}^{n}\frac{[n]}{[k]}\left[
\begin{array}{c}
n-1 \\
k-1%
\end{array}%
\right] \biggl{(}x-\frac{\alpha _{2}}{[n]+\beta _{2}}\biggl{)}_{q}^{k}~%
\biggl{(}\frac{[n]+\alpha _{2}}{[n]+\beta _{2}}-x\biggl{)}_{q}^{n-k}[k]^{2}
\\
&&+\biggl{(}\frac{2(q-1)}{[3]}+2\alpha _{1}+\frac{2\alpha _{1}(q-1)}{[2]}+%
\frac{2}{[2]}\biggl{)}\biggl{(}\frac{[n]+\beta _{2}}{[n]}\biggl{)}%
^{n}\sum\limits_{k=0}^{n}\left[
\begin{array}{c}
n \\
k%
\end{array}%
\right] \biggl{(}x-\frac{\alpha _{2}}{[n]+\beta _{2}}\biggl{)}_{q}^{k}~%
\biggl{(}\frac{[n]+\alpha _{2}}{[n]+\beta _{2}}-x\biggl{)}_{q}^{n-k}[k] \\
&&+\biggl{(}\alpha _{1}^{2}+\frac{2\alpha _{1}}{[2]}+\frac{1}{[3]}\biggl{)}%
\biggl{(}\frac{[n]+\beta _{2}}{[n]}\biggl{)}^{n}~\sum\limits_{k=0}^{n}\left[
\begin{array}{c}
n \\
k%
\end{array}%
\right] \biggl{(}x-\frac{\alpha _{2}}{[n]+\beta _{2}}\biggl{)}_{q}^{k}~%
\biggl{(}\frac{[n]+\alpha _{2}}{[n]+\beta _{2}}-x\biggl{)}_{q}^{n-k}%
\biggl{\}} \\
&=&\frac{1}{([n+1]+\beta _{1})^{2}}\biggl{\{}\biggl{(}1+\frac{(q-1)^{2}}{[3]}%
+\frac{2(q-1)}{[2]}\biggl{)}\biggl{(}\frac{[n]+\beta _{2}}{[n]}\biggl{)}^{n}
\\
&&\times \sum\limits_{k=0}^{n-1}[n]\left[
\begin{array}{c}
n-1 \\
k%
\end{array}%
\right] \biggl{(}x-\frac{\alpha _{2}}{[n]+\beta _{2}}\biggl{)}_{q}^{k+1}~%
\biggl{(}\frac{[n]+\alpha _{2}}{[n]+\beta _{2}}-x\biggl{)}_{q}^{n-k-1}[k+1]
\\
&&+\biggl{(}\frac{2(q-1)}{[3]}+2\alpha _{1}+\frac{2\alpha _{1}(q-1)}{[2]}+%
\frac{2}{[2]}\biggl{)}\biggl{(}\frac{[n]+\beta _{2}}{[n]}\biggl{)}^{n} \\
&&\times \sum\limits_{k=0}^{n-1}[n]\left[
\begin{array}{c}
n-1 \\
k%
\end{array}%
\right] \biggl{(}x-\frac{\alpha _{2}}{[n]+\beta _{2}}\biggl{)}_{q}^{k+1}~%
\biggl{(}\frac{[n]+\alpha _{2}}{[n]+\beta _{2}}-x\biggl{)}_{q}^{n-k-1}+%
\biggl{(}\alpha _{1}^{2}+\frac{2\alpha _{1}}{[2]}+\frac{1}{[3]}\biggl{)}%
\biggl{\}} \\
&=&\frac{1}{([n+1]+\beta _{1})^{2}}\biggl{\{}\biggl{(}1+\frac{(q-1)^{2}}{[3]}%
+\frac{2(q-1)}{[2]}\biggl{)}\biggl{(}\frac{[n]+\beta _{2}}{[n]}\biggl{)}^{n}
\\
&&\times \sum\limits_{k=1}^{n-1}[n]\frac{[n-1]}{[k]}\left[
\begin{array}{c}
n-2 \\
k-1%
\end{array}%
\right] \biggl{(}x-\frac{\alpha _{2}}{[n]+\beta _{2}}\biggl{)}_{q}^{k+1}~%
\biggl{(}\frac{[n]+\alpha _{2}}{[n]+\beta _{2}}-x\biggl{)}_{q}^{n-k-1}q[k] \\
&&+\biggl{(}1+\frac{(q-1)^{2}}{[3]}+\frac{2(q-1)}{[2]}+\frac{2(q-1)}{[3]}%
+2\alpha _{1}+\frac{2\alpha _{1}(q-1)}{[2]}+\frac{2}{[2]}\biggl{)}\biggl{(}%
\frac{[n]+\beta _{2}}{[n]}\biggl{)}^{n} \\
&&\times \sum\limits_{k=0}^{n-1}[n]\left[
\begin{array}{c}
n-1 \\
k%
\end{array}%
\right] \biggl{(}x-\frac{\alpha _{2}}{[n]+\beta _{2}}\biggl{)}_{q}^{k+1}~%
\biggl{(}\frac{[n]+\alpha _{2}}{[n]+\beta _{2}}-x\biggl{)}_{q}^{n-k-1}+%
\biggl{(}\alpha _{1}^{2}+\frac{2\alpha _{1}}{[2]}+\frac{1}{[3]}\biggl{)}%
\biggl{\}} \\
&=&\frac{1}{([n+1]+\beta _{1})^{2}}\biggl{\{}\biggl{(}1+\frac{(q-1)^{2}}{[3]}%
+\frac{2(q-1)}{[2]}\biggl{)}\biggl{(}\frac{[n]+\beta _{2}}{[n]}\biggl{)}^{n}
\\
&&\times \sum\limits_{k=0}^{n-2}q[n][n-1]\left[
\begin{array}{c}
n-2 \\
k%
\end{array}%
\right] \biggl{(}x-\frac{\alpha _{2}}{[n]+\beta _{2}}\biggl{)}_{q}^{k+2}~%
\biggl{(}\frac{[n]+\alpha _{2}}{[n]+\beta _{2}}-x\biggl{)}_{q}^{n-k-2} \\
&&+\biggl{(}1+\frac{q^{2}-1}{[3]}+(2\alpha _{1}+1)\frac{2q}{[2]}\biggl{)}%
\biggl{(}\frac{[n]+\beta _{2}}{[n]}\biggl{)}^{n}~\sum\limits_{k=0}^{n-1}[n]%
\left[
\begin{array}{c}
n-1 \\
k%
\end{array}%
\right] \biggl{(}x-\frac{\alpha _{2}}{[n]+\beta _{2}}\biggl{)}_{q}^{k+1}%
\biggl{(}\frac{[n]+\alpha _{2}}{[n]+\beta _{2}}-x\biggl{)}_{q}^{n-k-1} \\
&&+\biggl{(}\alpha _{1}^{2}+\frac{2\alpha _{1}}{[2]}+\frac{1}{[3]}\biggl{)}%
\biggl{\}} \\
&=&\frac{q[n-1]}{[n]}\biggl{(}1+\frac{(q-1)^{2}}{[3]}+\frac{2(q-1)}{[2]}%
\biggl{)}\biggl{(}\frac{[n]+\beta _{2}}{[n+1]+\beta _{1}}\biggl{)}^{2}%
\biggl{(}x-\frac{\alpha _{2}}{[n]+\beta _{2}}\biggl{)}_{q}^{2} \\
&&+\biggl{(}1+\frac{q^{2}-1}{[3]}+(2\alpha _{1}+1)\frac{2q}{[2]}\biggl{)}%
\biggl{(}\frac{[n]+\beta _{2}}{([n+1]+\beta _{1})^{2}}\biggl{)}\biggl{(}x-%
\frac{\alpha _{2}}{[n]+\beta _{2}}\biggl{)}\frac{1}{([n+1]+\beta _{1})^{2}}%
\biggl{(}\alpha _{1}^{2}+\frac{2\alpha _{1}}{[2]}+\frac{1}{[3]}\biggl{)}
\end{eqnarray*}
\end{proof}

\begin{lem}
\label{2.2} For all $x\in\left[\frac{\alpha_2}{[n]+\beta_2},\frac{%
[n]+\alpha_2}{[n]+\beta_2}\right]$, we have
\begin{eqnarray*}
  K^{(\alpha,\beta)}_{n,q}\bigl{(}(t-x)^2;x\bigl{)}&\leq& \frac{2q^2(2q+1)}{[2][3]}\frac{[n]([n]+\alpha_2)}{([n+1]+\beta_1)^2}
   +\frac{q}{1+q}\left(\frac{3+5q+4q^2}{1+q+q^2}+4\alpha_1\right)\frac{[n]}{([n+1]+\beta_1)^2}\\
  &&-\frac{2}{1+q}\frac{(2q[n]+2\alpha_1+1)([n]+\alpha_2)}{([n+1]+\beta_1)([n]+\beta_2)}
  +\left(\frac{[n]+\alpha_2}{[n]+\beta_2}\right)^2+\left(\frac{1+\alpha_1}{[n+1]+\beta_1}\right)^2
\end{eqnarray*}
\end{lem}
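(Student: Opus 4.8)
The plan is to use that $K^{(\alpha,\beta)}_{n,q}$ is linear and reproduces constants, so that the second central moment splits into the three moments already evaluated in Lemma~\ref{2.1}. Writing
\[
K^{(\alpha,\beta)}_{n,q}\big((t-x)^2;x\big)=K^{(\alpha,\beta)}_{n,q}(t^2;x)-2x\,K^{(\alpha,\beta)}_{n,q}(t;x)+x^2\,K^{(\alpha,\beta)}_{n,q}(1;x),
\]
I would substitute parts (i)--(iii) of Lemma~\ref{2.1}. Since $K^{(\alpha,\beta)}_{n,q}(1;x)=1$, the last summand is exactly $x^2$, and after substitution everything is expressed through $x$ and the shifted variable $\varphi:=x-\frac{\alpha_2}{[n]+\beta_2}$ (the latter also entering through its $q$-square). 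The hypothesis on the range of $x$ says precisely that $0\le\varphi\le\frac{[n]}{[n]+\beta_2}$ and $\varphi\le x\le\frac{[n]+\alpha_2}{[n]+\beta_2}$, and these two inequalities together with a few elementary $q$-integer estimates are all that is needed to pass from this equality to the stated bound.

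The estimates I expect to use are the following. First, $q[n-1]=[n]-1\le[n]$, so the prefactor $\frac{q[n-1]}{[n]}$ appearing in $K^{(\alpha,\beta)}_{n,q}(t^2;x)$ is $\le1$; combined with $\big(x-\frac{\alpha_2}{[n]+\beta_2}\big)_q^2\le\frac{[n]}{[n]+\beta_2}\cdot\frac{[n]+\alpha_2}{[n]+\beta_2}$ (bounding one factor of $\varphi$ by $\frac{[n]}{[n]+\beta_2}$ and the other by $x\le\frac{[n]+\alpha_2}{[n]+\beta_2}$), this produces the first listed term, carrying the factor $[n]([n]+\alpha_2)$. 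Next, the linear-in-$\varphi$ part of $K^{(\alpha,\beta)}_{n,q}(t^2;x)$ is estimated by $\varphi\le\frac{[n]}{[n]+\beta_2}$, which after clearing the common denominator $[2][3]=(1+q)(1+q+q^2)$ collapses exactly to the second term $\frac{q}{1+q}\big(\frac{3+5q+4q^2}{1+q+q^2}+4\alpha_1\big)\frac{[n]}{([n+1]+\beta_1)^2}$. The constant part is handled by $\frac1{[2]}\le1$ and $\frac1{[3]}\le1$, giving $\alpha_1^2+\frac{2\alpha_1}{[2]}+\frac1{[3]}\le(1+\alpha_1)^2$ and hence the last term, while $x^2\le\big(\frac{[n]+\alpha_2}{[n]+\beta_2}\big)^2$ yields the fourth term.

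The remaining contribution is the negative cross term $-2x\,K^{(\alpha,\beta)}_{n,q}(t;x)$, and this is where the main obstacle lies. Using the range of $x$ one extracts the factor $\frac{[n]+\alpha_2}{[n]+\beta_2}$ together with the value $K^{(\alpha,\beta)}_{n,q}(t;x)=\frac{1}{[2]([n+1]+\beta_1)}\big(2q[n]+(1+q)\alpha_1+1\big)$ at the upper endpoint, and then bounds $(1+q)\alpha_1\le2\alpha_1$; this is what produces the subtracted term $-\frac{2}{1+q}\frac{(2q[n]+2\alpha_1+1)([n]+\alpha_2)}{([n+1]+\beta_1)([n]+\beta_2)}$. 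The delicate point is the sign bookkeeping: the quadratic, linear, constant and $x^2$ contributions are all nonnegative and are enlarged by pushing $x$ to the upper end of its range, whereas the cross term is negative, so one must verify that the estimates are applied in directions keeping the total inequality valid and leaving exactly the five advertised terms with their correct signs. Apart from this, the only labour is the routine consolidation of the $q$-bracket coefficients $1+\frac{(q-1)^2}{[3]}+\frac{2(q-1)}{[2]}$ and $1+\frac{q^2-1}{[3]}+(2\alpha_1+1)\frac{2q}{[2]}$ into the rational factors of the statement, which is immediate once one writes $[2]=1+q$, $[3]=1+q+q^2$ and clears denominators.
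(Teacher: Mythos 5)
Your plan is essentially the paper's own proof: expand $K^{(\alpha,\beta)}_{n,q}\bigl((t-x)^2;x\bigr)=K^{(\alpha,\beta)}_{n,q}(t^2;x)-2x\,K^{(\alpha,\beta)}_{n,q}(t;x)+x^2$ via Lemma 2.1, substitute the right endpoint $x=\frac{[n]+\alpha_2}{[n]+\beta_2}$ (and $x-\frac{\alpha_2}{[n]+\beta_2}\le\frac{[n]}{[n]+\beta_2}$) term by term, then consolidate the $q$-brackets; the paper covers the substitution step with the phrase ``monotonicity of positive linear operators.'' But the ``delicate point'' you flag and leave to be verified is precisely where this argument breaks, so what you have is a genuine gap, not a routine check. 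On the interval in question both $x$ and $K^{(\alpha,\beta)}_{n,q}(t;x)$ are nonnegative and nondecreasing in $x$, hence $-2x\,K^{(\alpha,\beta)}_{n,q}(t;x)$ is \emph{decreasing} in $x$: an upper bound for it requires evaluation at the \emph{left} endpoint $\frac{\alpha_2}{[n]+\beta_2}$, whereas you substitute the right endpoint, which bounds this term from \emph{below} and invalidates the chain of inequalities. The same wrong direction occurs when you enlarge $(1+q)\alpha_1$ to $2\alpha_1$ inside the subtracted term, and in the analogous substitutions into the other negative pieces (those coming from the $-(1+q)\frac{\alpha_2}{[n]+\beta_2}x$ part of the $q$-square). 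No appeal to monotonicity of the whole expression can repair this: a second central moment is generally not monotone in $x$ (for $q=1$ and $\alpha_i=\beta_i=0$ it equals $\frac{(n-1)x(1-x)+1/3}{(n+1)^2}$, which peaks in the interior of $[0,1]$), so a sound proof must either maximize the full quadratic in $x$ honestly or group terms so that each group has a definite sign and is estimated in the correct direction.

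There is also an algebraic defect in the consolidation you call immediate: the quadratic coefficient satisfies $1+\frac{(q-1)^2}{[3]}+\frac{2(q-1)}{[2]}=\frac{q(1+q+4q^2)}{[2][3]}$, which for $0<q<1$ is strictly \emph{larger} than the factor $\frac{2q^2(2q+1)}{[2][3]}$ appearing in the first advertised term (they agree only at $q=1$). So even where your substitutions do go the right way (the positive quadratic, linear, constant and $x^2$ pieces), the bound does not collapse to the stated first term by an inequality in the correct direction. Both defects are inherited from the paper, whose proof of this lemma suffers from exactly the same two problems; but since your proposal explicitly defers the sign bookkeeping and asserts the consolidation is exact, it does not close these holes and therefore does not establish the stated bound.
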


\begin{proof}
From Lemma 2.1, we have
\begin{eqnarray*}
K^{(\alpha,\beta)}_{n,q}\bigl{(}(t-x)^2;x\bigl{)} &=& \frac{q[n-1]}{[n]}%
\biggl{(}1+\frac{(q-1)^2}{[3]}+\frac{2(q-1)}{[2]}\biggl{)}\biggl{(}\frac{%
[n]+\beta_2}{[n+1]+\beta_1}\biggl{)}^2 \biggl{(}x-\frac{\alpha_2}{[n]+\beta_2%
}\biggl{)}^2_q \\
&&+\biggl{(}1+\frac{q^2-1}{[3]}+(2\alpha_1+1)\frac{2q}{1+q}\biggl{)} %
\biggl{(}\frac{[n]+\beta_2}{([n+1]+\beta_1)^2}\biggl{)} \biggl{(}x-\frac{%
\alpha_2}{[n]+\beta_2}\biggl{)} \\
&&+\frac1{([n+1]+\beta_1)^2}\biggl{(}\alpha_1^2+\frac{2\alpha_1}{[2]}%
+\frac1{[3]}\biggl{)} \\
&& -2x\biggl{\{}\frac{[n]+\beta_2}{[n+1]+\beta_1}\frac{2q}{[2]}\biggl{(}x-%
\frac{\alpha_2}{[n]+\beta_2}\biggl{)}+\frac1{[n+1]+\beta_1} \biggl{(}%
\alpha_1+\frac1{[2]}\biggl{)}\biggl{\}}+x^2
\end{eqnarray*}

\begin{eqnarray*}
K^{(\alpha,\beta)}_{n,q}\bigl{(}(t-x)^2;x\bigl{)} &=& \biggl{\{}\frac{q[n-1]%
}{[n]}\biggl{(}1+\frac{(q-1)^2}{[3]}+\frac{2(q-1)}{[2]}\biggl{)}\biggl{(}%
\frac{[n]+\beta_2}{[n+1]+\beta_1}\biggl{)}^2 -\frac{4q}{1+q}\frac{[n]+\beta_2%
}{[n+1]+\beta_1}+1\biggl{\}}x^2 \\
&& +\biggl{\{}\biggl{(}1+\frac{q^2-1}{[3]}+(2\alpha_1+1)\frac{2q}{1+q}%
\biggl{)}\frac{[n]+\beta_2}{([n+1]+\beta_1)^2} \\
&& -\frac{q[n-1]}{[n]}\frac{[2]\alpha_2}{[n]+\beta_2}\biggl{(}1+\frac{(q-1)^2%
}{[3]} +\frac{2(q-1)}{[2]}\biggl{)}\biggl{(}\frac{[n]+\beta_2}{[n+1]+\beta_1}%
\biggl{)}^2 \\
&& +\frac{2q}{1+q}\frac{2\alpha_2}{[n]+\beta_2}\biggl{(}\frac{[n]+\beta_2}{%
[n+1]+\beta_1}\biggl{)} -\frac2{[n+1]+\beta_1}\biggl{(}\alpha_1+\frac1{[2]}%
\biggl{)}\biggl{\}}x \\
&&+\frac{q^2[n-1]}{[n]}\biggl{(}1+\frac{(q-1)^2}{[3]}+\frac{2(q-1)}{[2]}%
\biggl{)}\biggl{(}\frac{\alpha_2}{[n+1]+\beta_1}\biggl{)}^2 \\
&&-\biggl{(}1+\frac{q^2-1}{[3]}+(2\alpha_1+1)\frac{2q}{1+q}\biggl{)}\frac{%
\alpha_2}{([n+1]+\beta_1)^2} +\frac1{([n+1]+\beta_1)^2}\biggl{(}\alpha_1^2+%
\frac{2\alpha_1}{[2]}+\frac1{[3]}\biggl{)}
\end{eqnarray*}
By using the monotonicity of positive linear operators $K^{(\alpha,\beta)}_{n,q}$ over $\left[\frac{\alpha_2}{[n]+\beta_2},\frac{%
[n]+\alpha_2}{[n]+\beta_2}\right]$, we have
\begin{eqnarray*}
   K^{(\alpha,\beta)}_{n,q}\bigl{(}(t-x)^2;x\bigl{)} &\leq& \left(1-\frac1{[n]}\right)\biggl{(}1+\frac{(q-1)^2}{[3]}+\frac{2(q-1)}{[2]}\biggl{)}
   \biggl{\{}\left(\frac{[n]+\alpha_2}{[n+1]+\beta_1}\right)^2-\frac{[2]\alpha_2([n]+\alpha_2)}{([n+1]+\beta_1)^2}+\frac{q\alpha_2^2}{([n+1]+\beta_1)^2}\biggl{\}} \\
  &&+ \biggl{(}1+\frac{q^2-1}{[3]}+(2\alpha_1+1)\frac{2q}{1+q}\biggl{)}\frac{[n]}{([n+1]+\beta_1)^2}+\left(\frac{[n]+\alpha_2}{[n]+\beta_2}\right)^2 \\
   && +\frac{4q}{1+q}\frac{\alpha_2([n]+\alpha_2)}{([n+1]+\beta_1)([n]+\beta_2)}-2\left(\alpha_1+\frac1{[2]}\right)\frac{([n]+\alpha_2)}{([n+1]+\beta_1)([n]+\beta_2)}\\
   &&-\frac{4q}{1+q}\frac{([n]+\alpha_2)^2}{([n+1]+\beta_1)([n]+\beta_2)}+\frac1{([n+1]+\beta_1)^2}\biggl{(}\alpha_1^2+%
\frac{2\alpha_1}{[2]}+\frac1{[3]}\biggl{)}
\end{eqnarray*}
\begin{eqnarray*}
   &=&  \left(1-\frac1{[n]}\right)\frac{2q^2(2q+1)}{[2][3]}\frac{[n]^2+(1-q)[n]\alpha_2}{([n+1]+\beta_1)^2}
   +\frac{q}{1+q}\left(\frac{3+5q+4q^2}{1+q+q^2}+4\alpha_1\right)\frac{[n]}{([n+1]+\beta_1)^2}+\left(\frac{[n]+\alpha_2}{[n]+\beta_2}\right)^2 \\
   &&+\frac{2(2q\alpha_2-[2]\alpha_1-1)}{[2]}\frac{[n]+\alpha_2}{([n+1]+\beta_1)([n]+\beta_2)} -\frac{4q}{1+q}\frac{([n]+\alpha_2)^2}{([n+1]+\beta_1)([n]+\beta_2)}+\frac1{([n+1]+\beta_1)^2}\biggl{(}\alpha_1^2+
\frac{2\alpha_1}{[2]}+\frac1{[3]}\biggl{)}
\end{eqnarray*}
\begin{eqnarray*}
   &\leq&  \frac{2q^2(2q+1)}{[2][3]}\frac{[n]([n]+\alpha_2)}{([n+1]+\beta_1)^2}
   +\frac{q}{1+q}\left(\frac{3+5q+4q^2}{1+q+q^2}+4\alpha_1\right)\frac{[n]}{([n+1]+\beta_1)^2} \\
   &&-\frac2{1+q}\frac{(2q[n]+2\alpha_1+1)([n]+\alpha_2)}{([n+1]+\beta_1)([n]+\beta_2)} +\left(\frac{[n]+\alpha_2}{[n]+\beta_2}\right)^2+\left(\frac{1+\alpha_1}{[n+1]+\beta_1}\right)^2
\end{eqnarray*}
which is the required result.
\end{proof}

\begin{lem}
\label{2.3} Assume that $0<q_n<1$, $q_n\to1$ and $q_n^n\to a~(0\leq a<1)$ as $n\to\infty$%
. Then we have
\begin{equation*}
\lim\limits_{n\to\infty}[n]_{q_n}K^{(\alpha,\beta)}_{n,q_n}\bigl{(}t-x;x%
\bigl{)}=-\frac{1+a+2(\beta_1-\beta_2)}{2}x+\frac{1+2(\alpha_1-\alpha_2)}{2};
\end{equation*}
\begin{equation*}
\lim\limits_{n\to\infty}[n]_{q_n}K^{(\alpha,\beta)}_{n,q_n}\bigl{(}(t-x)^2;x%
\bigl{)}=\bigl{(}a+2\beta_1-2\beta_2\bigl{)}x^2+x.
\end{equation*}

\begin{proof}
To prove the lemma we use formulae for $K_{n,q_n}^{\alpha,\beta}(t;x)$ and $%
K_{n,q_n}^{\alpha,\beta}(t^2;x)$ given in Lemma 2.1.\newline
\begin{eqnarray*}
\lim\limits_{n\to\infty}[n]_{q_n}K^{(\alpha,\beta)}_{n,q_n}\bigl{(}t-x;x%
\bigl{)} &=& \lim\limits_{n\to\infty}\biggl{(}\frac{[n]_{q_n}}{%
[n+1]_{q_n}+\beta_1}\biggl{)} \frac{2q_n\bigl{(}\lbrack n]_{q_n}+\beta_2%
\bigl{)}-(1+q_n)\bigl{(}\lbrack n+1]_{q_n}+\beta_1\bigl{)}}{[2]_{q_n}}x \\
&&-\alpha_2\lim\limits_{n\to\infty}\biggl{(}\frac{[n]_{q_n}}{%
[n+1]_{q_n}+\beta_1}\biggl{)}\frac{2q_n}{1+q_n} +\lim\limits_{n\to\infty}%
\biggl{(}\alpha_1+\frac{1}{[2]_{q_n}}\biggl{)}\frac{[n]_{q_n}}{%
[n+1]_{q_n}+\beta_1} \\
&=&\lim\limits_{n\to\infty}\frac{q_n^{n+1}-q_n^{n+2}-q_n+1+2q_n%
\beta_2(q_n-1)+\beta_1(1-q_n^2)}{q_n-1}\frac{x}{[2]_{q_n}}%
-\alpha_2+\alpha_1+\frac12 \\
&=&-\frac{1+a+2(\beta_1-\beta_2)}{2}x+\frac{1+2(\alpha_1-\alpha_2)}{2}.
\end{eqnarray*}
\begin{eqnarray*}
\lim\limits_{n\to\infty}[n]_{q_n}K^{(\alpha,\beta)}_{n,q_n}\bigl{(}(t-x)^2;x%
\bigl{)} &=&\lim\limits_{n\to\infty}[n]_{q_n}\bigl{(}K^{(\alpha,%
\beta)}_{n,q_n}(t^2;x)-x^2-2xK^{(\alpha,\beta)}_{n,q_n}(t-x;x)\bigl{)} \\
&=& \lim\limits_{n\to\infty}[n]_{q_n}\biggl{\{}\biggl{(}1-\frac1{[n]_{q_n}}%
\biggl{)}\biggl{(}1+\frac{(q_n-1)^2}{[3]_{q_n}}+\frac{2(q_n-1)}{[2]_{q_n}}%
\biggl{)} \biggl{(}\frac{[n]_{q_n}+\beta_2}{[n+1]_{q_n}+\beta_1}\biggl{)}^2-1%
\biggl{\}}x^2 \\
&&+\lim\limits_{n\to\infty}[n]_{q_n}\biggl{\{}-\biggl{(}1-\frac1{[n]_{q_n}}%
\biggl{)}\biggl{(}1+\frac{(q_n-1)^2}{[3]_{q_n}} +\frac{2(q_n-1)}{[2]_{q_n}}%
\biggl{)}\biggl{(}\frac{[n]_{q_n}+\beta_2}{[n+1]_{q_n}+\beta_1}\biggl{)}^2%
\frac{[2]_{q_n}\alpha_2}{[n]_{q_n}+\beta_2} \\
&&+\biggl{(}1+\frac{q_n^2-1}{[3]_{q_n}}+\frac{2q_n(2\alpha_1+1)}{1+q_n}%
\biggl{)}\frac{[n]_{q_n}+\beta_2}{\bigl{(}\lbrack n+1]_{q_n}+\beta_1\bigl{)}%
^2}\biggl{\}}x \\
&& +\lim\limits_{n\to\infty}[n]_{q_n}\biggl{(}1-\frac1{[n]_{q_n}}\biggl{)}%
\biggl{(}1+\frac{(q_n-1)^2}{[3]_{q_n}}+\frac{2(q_n-1)}{[2]_{q_n}}\biggl{)} %
\biggl{(}\frac{[n]_{q_n}+\beta_2}{[n+1]_{q_n}+\beta_1}\biggl{)}^2\frac{%
q_n\alpha_2^2}{([n]_{q_n}+\beta_2)^2} \\
&& -\lim\limits_{n\to\infty}[n]_{q_n}\biggl{(}1+\frac{q_n^2-1}{[3]_{q_n}}+%
\frac{2q_n(2\alpha_1+1)}{1+q_n}\biggl{)}\frac{[n]_{q_n}+\beta_2}{\bigl{(}%
\lbrack n+1]_{q_n} +\beta_1\bigl{)}^2}\frac{\alpha_2}{[n]_{q_n}+\beta_2} \\
&& -2x\lim\limits_{n\to\infty}[n]_{q_n}K^{(\alpha,\beta)}_{n,q_n}(t-x;x) \\
&=& -2\alpha_2x-x^2+2x+2\alpha_1x-2x\biggl{(}-\frac{1+a+2(\beta_1-\beta_2)}{2%
}x+\frac{1+2(\alpha_1-\alpha_2)}{2}\biggl{)} \\
&=& \bigl{(}a+2\beta_1-2\beta_2\bigl{)}x^2+x.
\end{eqnarray*}
\end{proof}
\end{lem}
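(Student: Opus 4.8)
The plan is to read both limits off the three moment formulas in Lemma~\ref{2.1}, after recording the elementary asymptotics of $q_n$-integers forced by $q_n\to1$ and $q_n^n\to a$. The facts I would isolate first are $[2]_{q_n}\to2$, $[3]_{q_n}\to3$, $q_n^{n+1}=q_n\,q_n^n\to a$, $[n]_{q_n}\to\infty$, the exact relations $\frac{q_n[n-1]_{q_n}}{[n]_{q_n}}=1-\frac1{[n]_{q_n}}$, $[n+1]_{q_n}=1+q_n[n]_{q_n}$ and $[n]_{q_n}-[n+1]_{q_n}=-q_n^n$, and---most useful---$[n]_{q_n}(q_n-1)=q_n^n-1\to a-1$ together with the identity $2q_n[n]_{q_n}-(1+q_n)[n+1]_{q_n}=-(1+q_n^{n+1})$. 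Each of these turns the expressions below into ratios whose limits can be read off, and they also give $\frac{[n]_{q_n}}{[n+1]_{q_n}+\beta_1}\to1$ and $\frac{[n]_{q_n}([n]_{q_n}+\beta_2)}{([n+1]_{q_n}+\beta_1)^2}\to1$.

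For the first limit, since $K(1;x)=1$ I would write $K(t-x;x)=K(t;x)-x$, substitute Lemma~\ref{2.1}(ii), multiply by $[n]_{q_n}$, and split into the coefficient of $x$ and a constant. In the coefficient of $x$ the prefactor $\frac{[n]_{q_n}}{[n+1]_{q_n}+\beta_1}$ tends to $1$, and the remaining numerator is, up to the $\beta$-terms, exactly $2q_n[n]_{q_n}-(1+q_n)[n+1]_{q_n}$; the displayed identity collapses it to $-(1+q_n^{n+1})+2q_n\beta_2-(1+q_n)\beta_1$ over $[2]_{q_n}$, whose limit is $-\tfrac{1+a+2(\beta_1-\beta_2)}2$. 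The constant is handled the same way and gives $\tfrac{1+2(\alpha_1-\alpha_2)}2$. This part is routine.

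For the second limit I would use linearity with $K(1;x)=1$ and $K(t;x)=K(t-x;x)+x$ to write $K\bigl((t-x)^2;x\bigr)=\bigl(K(t^2;x)-x^2\bigr)-2x\,K(t-x;x)$, so that $[n]_{q_n}K\bigl((t-x)^2;x\bigr)=[n]_{q_n}\bigl(K(t^2;x)-x^2\bigr)-2x\,[n]_{q_n}K(t-x;x)$; the last summand is supplied by the first limit. For the remaining summand I would insert Lemma~\ref{2.1}(iii), expand $\bigl(x-\tfrac{\alpha_2}{[n]+\beta_2}\bigr)_q^2=x^2-[2]\tfrac{\alpha_2}{[n]+\beta_2}x+q\bigl(\tfrac{\alpha_2}{[n]+\beta_2}\bigr)^2$, and group $[n]_{q_n}\bigl(K(t^2;x)-x^2\bigr)$ by powers of $x$; each coefficient is an $\infty\cdot0$ indeterminate to be evaluated with the asymptotics above.

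The main obstacle is the coefficient of $x^2$, i.e.\ $\lim [n]_{q_n}(A-1)$ with $A=\frac{q_n[n-1]_{q_n}}{[n]_{q_n}}\bigl(1+\tfrac{(q_n-1)^2}{[3]_{q_n}}+\tfrac{2(q_n-1)}{[2]_{q_n}}\bigr)\bigl(\tfrac{[n]_{q_n}+\beta_2}{[n+1]_{q_n}+\beta_1}\bigr)^2$. All three factors tend to $1$, but each deviates at order $1/[n]_{q_n}$, and since we then multiply by $[n]_{q_n}$ every deviation survives; the safe route is to write $A$ as a product $\prod_i\bigl(1+c_i/[n]_{q_n}+o(1/[n]_{q_n})\bigr)$ and add the $c_i$. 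Here $[n]_{q_n}\bigl(\tfrac{q_n[n-1]_{q_n}}{[n]_{q_n}}-1\bigr)=-1$ exactly, $[n]_{q_n}\bigl(1+\tfrac{(q_n-1)^2}{[3]_{q_n}}+\tfrac{2(q_n-1)}{[2]_{q_n}}-1\bigr)\to a-1$ (the squared term dropping out since $[n]_{q_n}(q_n-1)^2\to0$), and $[n]_{q_n}\bigl(\bigl(\tfrac{[n]_{q_n}+\beta_2}{[n+1]_{q_n}+\beta_1}\bigr)^2-1\bigr)\to2(\beta_2-\beta_1-a)$ via $[n]_{q_n}-[n+1]_{q_n}=-q_n^n$. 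Carrying out this bookkeeping exactly---keeping all three $1/[n]_{q_n}$-order corrections rather than only the manifest $-1/[n]_{q_n}$ from the first factor---is the delicate point. The coefficients of $x$ and the constant are then obtained routinely by the same device: the surviving contributions to the coefficient of $x$ come from $[n]_{q_n}$ times the coefficient of $\bigl(x-\tfrac{\alpha_2}{[n]+\beta_2}\bigr)$ in Lemma~\ref{2.1}(iii) (limit $2+2\alpha_1$) and from the cross term $-[n]_{q_n}[2]\tfrac{\alpha_2}{[n]+\beta_2}A$ (limit $-2\alpha_2$), while the constant tends to $0$. Finally I would add the part-(i) contribution $-2x\lim[n]_{q_n}K(t-x;x)$ and collect powers of $x$ to read off the second limit.
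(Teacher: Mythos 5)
Your overall strategy coincides with the paper's: express both moments through Lemma 2.1, expand $\bigl(x-\tfrac{\alpha_2}{[n]+\beta_2}\bigr)_q^2=x^2-[2]\tfrac{\alpha_2}{[n]+\beta_2}x+q\bigl(\tfrac{\alpha_2}{[n]+\beta_2}\bigr)^2$, collect powers of $x$, and feed the first limit into the second via $K\bigl((t-x)^2;x\bigr)=\bigl(K(t^2;x)-x^2\bigr)-2xK(t-x;x)$. Your individual asymptotics are also correct (writing $q=q_n$, $B=1+\tfrac{(q-1)^2}{[3]}+\tfrac{2(q-1)}{[2]}$, $C=\tfrac{[n]+\beta_2}{[n+1]+\beta_1}$, $A=(1-\tfrac1{[n]})BC^2$): indeed $[n]\bigl(\tfrac{q[n-1]}{[n]}-1\bigr)=-1$, $[n](B-1)\to a-1$, $[n](C^2-1)\to 2(\beta_2-\beta_1-a)$. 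The genuine problem is that these numbers do not add up to the formula you are supposed to prove, and you never notice it. Your $x^2$-coefficient is $\lim[n](A-1)=-1+(a-1)+2(\beta_2-\beta_1-a)=-2-a+2\beta_2-2\beta_1$; adding the contribution $1+a+2(\beta_1-\beta_2)$ coming from $-2x\lim[n]K^{(\alpha,\beta)}_{n,q_n}(t-x;x)$ gives a total of $-1$, and your $x$-coefficient accounting gives $(2+2\alpha_1-2\alpha_2)-(1+2\alpha_1-2\alpha_2)=1$. So the computation you outline, carried out faithfully, yields $\lim[n]K^{(\alpha,\beta)}_{n,q_n}\bigl((t-x)^2;x\bigr)=x-x^2$, independent of $a,\alpha_i,\beta_i$ --- it refutes the second formula of the lemma rather than proving it.

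The source of the clash is that the paper reaches the stated formula only by evaluating $\lim[n]\bigl\{(1-\tfrac1{[n]})BC^2-1\bigr\}$ as $-1$, i.e.\ by silently discarding exactly the two order-$1/[n]$ corrections from $B$ and $C^2$ that you rightly insist on keeping; your ``delicate point'' is precisely the step at which the paper's argument is invalid, since those corrections are multiplied by $[n]$ and survive in the limit (they would cancel only if $a-1+2(\beta_2-\beta_1-a)=0$, which fails generically). Your accounting, not the paper's, is the sound one: as a check, with $\alpha_i=\beta_i=0$ and $q=1$ the operator reduces to the classical Bernstein--Kantorovich operator, for which a direct computation gives $nK_n\bigl((t-x)^2;x\bigr)\to x(1-x)$, matching your $-x^2+x$ and not $ax^2+x$. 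So the gap in your proposal is not in the asymptotics but in the unresolved contradiction: you cannot both keep those corrections and ``read off'' the claimed limit. A complete write-up would be forced to conclude that the lemma's second formula (and the paper's derivation of it) is incorrect, rather than to prove it.
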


\section{Convergence results}

First we give the following theorem on convergence of $K^{(\alpha,%
\beta)}_{n,q}(f;x)$ to $f(x)$.

\begin{thm}
\label{3.1}  Let $q=q_n\in(0,1)$ be a sequence such that $q_n\to1$ as $n\to\infty$ and $f$ a continuous function on [0,1]. Then
\begin{equation*}
\lim\limits_{n\to\infty}\max\limits_{\frac{\alpha_2}{[n]_{q_n}+\beta_2}\leq x\leq%
\frac{[n]_{q_n}+\alpha_2}{[n]_{q_n}+\beta_2}}\mid
K^{(\alpha,\beta)}_{n,q_n}(f;x)-f(x)\mid=0
\end{equation*}
\end{thm}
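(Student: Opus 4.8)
The plan is to invoke the Bohman--Korovkin theorem, which states that a sequence of positive linear operators $L_n$ converges uniformly to the identity on $C[0,1]$ provided $L_n(t^i;x)\rightrightarrows x^i$ for the three test functions $i=0,1,2$. Since the operators $K^{(\alpha,\beta)}_{n,q_n}$ are manifestly positive and linear, it suffices to verify this convergence for $e_0(t)=1$, $e_1(t)=t$, and $e_2(t)=t^2$ on the shifted interval, and then the uniform convergence for arbitrary continuous $f$ follows automatically. The one subtlety is that the domain $\left[\frac{\alpha_2}{[n]_{q_n}+\beta_2},\frac{[n]_{q_n}+\alpha_2}{[n]_{q_n}+\beta_2}\right]$ depends on $n$, but as $n\to\infty$ one has $[n]_{q_n}\to\infty$, so the lower endpoint tends to $0$ and the upper endpoint tends to $1$; thus the intervals exhaust $[0,1]$ in the limit and the Korovkin machinery applies once we control the test-function images uniformly.

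First I would record that $K^{(\alpha,\beta)}_{n,q_n}(1;x)=1$ exactly, by Lemma~\ref{2.1}(i), so the $i=0$ case is immediate. Next, using the explicit formula in Lemma~\ref{2.1}(ii), I would take the limit as $n\to\infty$: since $q_n\to1$ forces $[2]_{q_n}\to2$ and $\frac{[n]_{q_n}+\beta_2}{[n+1]_{q_n}+\beta_1}\to1$ while $\frac{1}{[n+1]_{q_n}+\beta_1}\to0$ and $\frac{\alpha_2}{[n]_{q_n}+\beta_2}\to0$, the expression collapses to $\frac{2\cdot1}{2}\,x = x$, giving $K^{(\alpha,\beta)}_{n,q_n}(t;x)\to x$. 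The convergence is uniform in $x$ on the relevant interval because each coefficient converges and $x$ ranges over a bounded set. The same strategy applied to Lemma~\ref{2.1}(iii) yields $K^{(\alpha,\beta)}_{n,q_n}(t^2;x)\to x^2$: the leading coefficient $\frac{q_n[n-1]_{q_n}}{[n]_{q_n}}\bigl(1+\tfrac{(q_n-1)^2}{[3]_{q_n}}+\tfrac{2(q_n-1)}{[2]_{q_n}}\bigr)\bigl(\tfrac{[n]_{q_n}+\beta_2}{[n+1]_{q_n}+\beta_1}\bigr)^2$ tends to $1$, while the remaining two terms carry factors of $\frac{1}{([n+1]_{q_n}+\beta_1)^2}$ or $\frac{1}{([n+1]_{q_n}+\beta_1)^2}$ and hence vanish.

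An alternative, cleaner route that I would actually prefer is to bound the error directly through the second moment. The standard Korovkin error estimate gives, for the modulus of continuity $\omega(f;\cdot)$,
\begin{equation*}
\bigl|K^{(\alpha,\beta)}_{n,q_n}(f;x)-f(x)\bigr|\le\Bigl(1+\tfrac{1}{\delta^2}K^{(\alpha,\beta)}_{n,q_n}\bigl((t-x)^2;x\bigr)\Bigr)\omega(f;\delta),
\end{equation*}
valid because $K^{(\alpha,\beta)}_{n,q_n}(1;x)=1$. Taking $\delta=\delta_n$ to be the square root of the supremum of the central second moment and applying Lemma~\ref{2.2}, one sees that the second-moment bound is $O\bigl(\tfrac{[n]_{q_n}}{([n+1]_{q_n}+\beta_1)^2}\bigr)=O\bigl(\tfrac{1}{[n]_{q_n}}\bigr)\to0$; choosing $\delta_n$ of this order and using that $\omega(f;\delta_n)\to0$ by uniform continuity of $f$ on $[0,1]$ forces the whole right-hand side to $0$ uniformly in $x$.

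The main obstacle is purely bookkeeping rather than conceptual: one must confirm that every limit in Step~2 holds \emph{uniformly} over the $n$-dependent interval and that Lemma~\ref{2.2}'s bound genuinely decays like $1/[n]_{q_n}$. The potentially delicate point is the leading term $\tfrac{2q^2(2q+1)}{[2][3]}\tfrac{[n]([n]+\alpha_2)}{([n+1]+\beta_1)^2}$ in Lemma~\ref{2.2}, which at first glance looks like it might tend to a nonzero constant; however, when combined with the term $\left(\tfrac{[n]+\alpha_2}{[n]+\beta_2}\right)^2$ and the cross term $-\tfrac{2}{1+q}\tfrac{(2q[n]+2\alpha_1+1)([n]+\alpha_2)}{([n+1]+\beta_1)([n]+\beta_2)}$, the leading $O(1)$ contributions cancel as $q_n\to1$, leaving only the $O(1/[n]_{q_n})$ remainder — this cancellation is exactly what Lemma~\ref{2.3} records in the normalized limit $K^{(\alpha,\beta)}_{n,q_n}((t-x)^2;x)\cdot[n]_{q_n}\to(a+2\beta_1-2\beta_2)x^2+x$. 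Verifying this cancellation carefully is the one step I would check with care before declaring the Korovkin hypotheses satisfied.
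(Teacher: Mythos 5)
Your proposal is essentially correct, but your preferred route is genuinely different from the paper's, and your first route has the gap that the paper's proof is specifically designed to avoid. The paper's key device is an auxiliary operator $K^*_{n,q_n}$ defined to equal $K^{(\alpha,\beta)}_{n,q_n}(f;x)$ on the $n$-dependent interval $\left[\frac{\alpha_2}{[n]_{q_n}+\beta_2},\frac{[n]_{q_n}+\alpha_2}{[n]_{q_n}+\beta_2}\right]$ and to equal $f(x)$ outside it; this extension produces a sequence of positive linear operators on the \emph{fixed} space $C[0,1]$ whose sup-norm distance to $f$ equals the maximum in the statement, so Korovkin's theorem applies literally. Your route 1 notes the moving-domain subtlety but then waves it away ("the Korovkin machinery applies"), which is precisely the step needing justification — so route 1 alone is incomplete. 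Your route 2, however, closes this gap by a different mechanism: since $K^{(\alpha,\beta)}_{n,q_n}(1;x)=1$, the estimate $|K^{(\alpha,\beta)}_{n,q_n}(f;x)-f(x)|\le\bigl(1+\delta^{-2}K^{(\alpha,\beta)}_{n,q_n}((t-x)^2;x)\bigr)\omega(f;\delta)$ needs only that the Lemma~\ref{2.2} bound (which is independent of $x$) tends to $0$, and no Korovkin theorem or extension is required. You correctly identified the one delicate point, the cancellation of the $O(1)$ terms, and it does hold: the leading parts of the first, third and fourth terms sum to $\frac{2(2q+1)}{(1+q)(1+q+q^2)}-\frac{4}{1+q}+1=\frac{(q-1)(q^2-q+1)}{(1+q)(1+q+q^2)}$, which is $O(1-q_n)=O(1/[n]_{q_n})$ because $(1-q_n)[n]_{q_n}=1-q_n^n\le1$, while the remaining terms are visibly $O(1/[n]_{q_n})$; one should also record that $q_n\to1$ forces $[n]_{q_n}\to\infty$, which your argument uses throughout. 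In effect your route 2 is the paper's own Theorem~\ref{3.2} (whose $\delta_n^2$ is exactly the Lemma~\ref{2.2} bound) pressed into service for Theorem~\ref{3.1}: it buys a quantitative rate $2\omega(f;\delta_n)$ and avoids the extension trick, whereas the paper's softer argument needs only the test-function limits of Lemma~\ref{2.1} plus Korovkin, at the cost of introducing $K^*_{n,q_n}$.
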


\begin{proof}
Taking into consideration the equalities in Lemma 1, for $v=0,1,2$ we can
write \newline
\begin{equation}  \label{3.1}
\lim\limits_{n\to\infty}\max\limits_{\frac{\alpha_2}{[n]_{q_n}+\beta_2}\leq x\leq%
\frac{[n]_{q_n}+\alpha_2}{[n]_{q_n}+\beta_2}}\left|
K^{(\alpha,\beta)}_{n,q_n}(t^v;x)-x^v\right|=0.
\end{equation}
Now consider the sequence of operators \newline
\newline
$~~~~~~~~~~~~~~~~~~~~~~K^*_{n,q_n}(f;x)=\left\{
\begin{array}{ll}
K^{(\alpha,\beta)}_{n,q_n}, & \hbox{if $\frac{\alpha_2}{[n]_{q_n}+\beta_2}\leq
x\leq\frac{[n]_{q_n}+\alpha_2}{[n]_{q_n}+\beta_2}$ ;} \\
f(x), & \hbox{if
$x\in\left[0,\frac{\alpha_2}{[n]_{q_n}+\beta_2}\right]\cup\left[\frac{[n]_{q_n}+%
\alpha_2}{[n]_{q_n}+\beta_2},1\right]$.}%
\end{array}
\right.$\newline
\newline
Then obviously,\newline
\begin{equation}  \label{3.2}
\|K^*_{n,q_n}f-f\|=\max\limits_{\frac{\alpha_2}{[n]_{q_n}+\beta_2}\leq x\leq\frac{%
[n]_{q_n}+\alpha_2}{[n]_{q_n}+\beta_2}}\mid K^{(\alpha,\beta)}_{n,q_n}(f;x)-f(x)\mid
\end{equation}
and using (3.1) we obtain\newline
\begin{equation*}
\lim\limits_{n\to\infty}\|K^*_{n,q_n}(t^v;x)-x^v\|_{C[0,1]}=0,~~v=0,1,2
\end{equation*}
Applying the Korovkin theorem \cite{kor} (see also \cite{cam}) to the
sequence of positive linear operators $K^*_{n,q_n}$, we obtain
\begin{equation*}
\lim\limits_{n\to\infty}\|K^*_{n,q_n}(f;x)-f(x)\|_{C[0,1]}=0
\end{equation*}
for every continuous function $f$. Therefore (3.2) gives
\begin{equation*}
\lim\limits_{n\to\infty}\max\limits_{\frac{\alpha_2}{[n]_{q_n}+\beta_2}\leq x\leq%
\frac{[n]_{q_n}+\alpha_2}{[n]_{q_n}+\beta_2}}\mid
K^{(\alpha,\beta)}_{n,q_n}(f;x)-f(x)\mid=0
\end{equation*}
and thus the result is obtained.
\end{proof}

We use modulus of continuity to give quantitative error estimates for the
approximation by positive linear operators.

\begin{thm}
\label{3.2}  If $f\in C[0,1]$ and $0<q<1$, then\newline
\begin{eqnarray*}
\|K^{(\alpha,\beta)}_{n,q}(f;x)-f(x)\|&\leq&2\omega_f(\delta_n),
\end{eqnarray*}
where
\begin{eqnarray*}
\delta_n^2&=&\frac{2q^2(2q+1)}{[2][3]}\frac{[n]([n]+\alpha_2)}{([n+1]+\beta_1)^2}
   +\frac{q}{1+q}\left(\frac{3+5q+4q^2}{1+q+q^2}+4\alpha_1\right)\frac{[n]}{([n+1]+\beta_1)^2}\\
   &&-\frac2{1+q}\frac{(2q[n]+2\alpha_1+1)([n]+\alpha_2)}{([n+1]+\beta_1)([n]+\beta_2)} +\left(\frac{[n]+\alpha_2}{[n]+\beta_2}\right)^2+\left(\frac{1+\alpha_1}{[n+1]+\beta_1}\right)^2.
\end{eqnarray*}
\end{thm}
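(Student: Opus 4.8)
The plan is to follow the classical Shisha--Mond scheme for positive linear operators that reproduce constants. The starting point is property (i) of Lemma~2.1, namely $K^{(\alpha,\beta)}_{n,q}(1;x)=1$, which lets me write $f(x)=f(x)\,K^{(\alpha,\beta)}_{n,q}(1;x)$ and hence
\[
\big|K^{(\alpha,\beta)}_{n,q}(f;x)-f(x)\big|
=\big|K^{(\alpha,\beta)}_{n,q}\big(f(t)-f(x);x\big)\big|
\le K^{(\alpha,\beta)}_{n,q}\big(|f(t)-f(x)|;x\big),
\]
the last inequality using positivity of the operator.

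Next I would invoke the standard bound for the modulus of continuity: for every $\delta>0$ and all $t,x\in[0,1]$, one has $|f(t)-f(x)|\le\big(1+\tfrac{1}{\delta}|t-x|\big)\,\omega_f(\delta)$. Applying $K^{(\alpha,\beta)}_{n,q}(\cdot;x)$, using linearity together with $K^{(\alpha,\beta)}_{n,q}(1;x)=1$, gives
\[
\big|K^{(\alpha,\beta)}_{n,q}(f;x)-f(x)\big|
\le\omega_f(\delta)\Big(1+\tfrac{1}{\delta}\,K^{(\alpha,\beta)}_{n,q}\big(|t-x|;x\big)\Big).
\]
The first moment of the absolute deviation is then controlled by the Cauchy--Schwarz inequality for positive linear operators,
\[
K^{(\alpha,\beta)}_{n,q}\big(|t-x|;x\big)
\le\Big(K^{(\alpha,\beta)}_{n,q}\big((t-x)^2;x\big)\Big)^{1/2}\Big(K^{(\alpha,\beta)}_{n,q}(1;x)\Big)^{1/2}
=\Big(K^{(\alpha,\beta)}_{n,q}\big((t-x)^2;x\big)\Big)^{1/2},
\]
where again (i) of Lemma~2.1 removes the second factor.

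At this stage the whole estimate reduces to the second central moment, and this is precisely where Lemma~2.2 does the work: it supplies the explicit upper bound for $K^{(\alpha,\beta)}_{n,q}((t-x)^2;x)$ that defines $\delta_n^2$ in the statement. Taking $\delta=\delta_n$ with $\delta_n^2$ equal to that bound makes $\tfrac{1}{\delta}\,K^{(\alpha,\beta)}_{n,q}(|t-x|;x)\le 1$, so the two terms collapse to the factor $2$, yielding $\|K^{(\alpha,\beta)}_{n,q}(f;x)-f(x)\|\le 2\omega_f(\delta_n)$.

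I expect the genuinely substantive step to be the moment estimate of Lemma~2.2; everything after it is routine modulus-of-continuity packaging. The only point requiring care is that the supremum norm is taken over $[0,1]$ while the natural range of $x$ is the shifted interval $\big[\tfrac{\alpha_2}{[n]+\beta_2},\tfrac{[n]+\alpha_2}{[n]+\beta_2}\big]$; since $\delta_n$ is independent of $x$, I would note that the pointwise bound passes to the sup-norm over this interval without further effort, so the quantity displayed as $\delta_n^2$ is exactly the $x$-free majorant furnished by Lemma~2.2.
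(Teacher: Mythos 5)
Your proposal is correct and takes essentially the same route as the paper: both arguments reduce everything to the second central moment bound of Lemma~2.2 and then choose $\delta=\delta_n$ so that the estimate collapses to the factor $2\omega_f(\delta_n)$. The only difference is cosmetic: the paper applies the quadratic inequality $|f(t)-f(x)|\le\omega_f(\delta)\bigl(1+(t-x)^2/\delta^2\bigr)$ directly, so it never needs your Cauchy--Schwarz step for $K^{(\alpha,\beta)}_{n,q}(|t-x|;x)$.
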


\begin{proof}
For any $x,y\in[a,b]$, it is known that
\begin{eqnarray*}
|f(y)-f(x)|&\leq &\omega_f(\delta)\biggl{(}\frac{(y-x)^2}{\delta^2}+1%
\biggl{)}.
\end{eqnarray*}
Therefore, we get
\begin{eqnarray*}
|K^{(\alpha,\beta)}_{n,q}(f;x)-f(x)|\leq
K^{(\alpha,\beta)}_{n,q}(|f(t)-f(x)|;x)&\leq & \omega_f(\delta)\biggl{(}%
1+\frac1{\delta^2}K^{(\alpha,\beta)}_{n,q}\bigl{(}(t-x)^2;x\bigl{)}\biggl{)}
\end{eqnarray*}

By using Lemma 2.2, we can write
\begin{eqnarray*}
|K^{(\alpha,\beta)}_{n,q}(f;x)-f(x)| &\leq& \omega_f(\delta)\biggl{[}
1+\frac1{\delta^2}\biggl{\{}\frac{2q^2(2q+1)}{[2][3]}\frac{[n]([n]+\alpha_2)}{([n+1]+\beta_1)^2}
   +\frac{q}{1+q}\left(\frac{3+5q+4q^2}{1+q+q^2}+4\alpha_1\right)\frac{[n]}{([n+1]+\beta_1)^2} \\
   &&-\frac2{1+q}\frac{(2q[n]+2\alpha_1+1)([n]+\alpha_2)}{([n+1]+\beta_1)([n]+\beta_2)} +\left(\frac{[n]+\alpha_2}{[n]+\beta_2}\right)^2+\left(\frac{1+\alpha_1}{[n+1]+\beta_1}\right)^2\biggl{\}}\biggl{]}
\end{eqnarray*}
Choosing
\begin{eqnarray*}
\delta=\delta_n&=&\biggl{\{}\frac{2q^2(2q+1)}{[2][3]}\frac{[n]([n]+\alpha_2)}{([n+1]+\beta_1)^2}
   +\frac{q}{1+q}\left(\frac{3+5q+4q^2}{1+q+q^2}+4\alpha_1\right)\frac{[n]}{([n+1]+\beta_1)^2}\\
   &&-\frac2{1+q}\frac{(2q[n]+2\alpha_1+1)([n]+\alpha_2)}{([n+1]+\beta_1)([n]+\beta_2)} +\left(\frac{[n]+\alpha_2}{[n]+\beta_2}\right)^2+\left(\frac{1+\alpha_1}{[n+1]+\beta_1}\right)^2\biggl{\}}^{\frac12},
\end{eqnarray*}
 we have

\begin{eqnarray*}
\|K^{(\alpha,\beta)}_{n,q}(f;x)-f(x)\|&\leq&2\omega_f(\delta_n),
\end{eqnarray*}

Thus, we obtain the desired result.
\end{proof}

\section{Local Approximation}

We begin considering the following $K$-functional:
\begin{equation*}
K_2(f,\delta^2):=\inf\bigl{\{}\|f-g\|+\delta^2\|g^{\prime \prime }\|,~~g\in C^2[0,1]%
\bigl{\}}, ~~~~\delta\geq0,
\end{equation*}
where
\begin{equation*}
C^2[0,1]:=\bigl{\{}g:~g,g^{\prime },g^{\prime \prime }\in C[0,1]\bigl{\}}.
\end{equation*}
Then, in view of a known result \cite{tot}, there exists an absolute
constant $C_0>0$ such that
\begin{equation*}
K_2(f,\delta^2)\leq C_0\omega_2(f,\delta)
\end{equation*}
where
\begin{equation*}
\omega_2(f,\delta):=\sup\limits_{0<h\leq\delta}\sup\limits_{x\pm h\in[0,1]}%
\bigl{|}f(x-h)-2f(x)+f(x+h)\bigl{|}
\end{equation*}
is the second modulus of smoothness of $f\in C[0,1]$.

\begin{thm}
\label{3.3} Let $f\in C[0,1]$ with $0<q<1$. Then for every $x\in\left[%
\frac{\alpha_2}{[n]+\beta_2},\frac{\left[n\right]+\alpha_2}{\left[n\right]+\beta_2}\right]$, we
have
\begin{equation*}
\bigl{|}K_{n,q}^{(\alpha,\beta)}(f;x)-f(x)\bigl{|}\leq C\omega_2\bigl{(}f,%
\sqrt{\delta_n(x)}\bigl{)}+\omega\bigl{(}f,|(a_n-1)x+b_n|\bigl{)}
\end{equation*}
where $a_n=\frac{2q}{1+q}\frac{[n]+\beta_2}{[n+1]+\beta_1}$, $%
b_n=\frac1{[n+1]+\beta_1}\left(\alpha_1+\frac1{1+q}\right)-\frac{2q}{1+q%
}\frac{\alpha_2}{[n+1]+\beta_1}$ and\newline
\begin{eqnarray*}
\delta_n(x)&=&\left\{\frac{1+2q+4q^2+5q^3}{1+2q+2q^2+q^3}\left(\frac{[n]+\beta_2}{[n+1]+\beta_1}\right)^2
-\frac{2(3q+1)}{1+q}\frac{[n]+\beta_2}{[n+1]+\beta_1}+2\right\}x^2\\
&&+\left\{\left(\frac{5+7q+6q^2}{1+q+q^2}+\frac{2q^2(2q+1)\alpha_2}{(1+q+q^2)[n]}+4\alpha_1\right)\frac{[n]+\beta_2}{([n+1]+\beta_1)^2}
+2\frac{\alpha_2}{[n+1]+\beta_1}\right\}x\\
&&+\frac{q^2(2q+1)}{1+q+q^2}\left(\frac{\alpha_2}{[n+1]+\beta_1}\right)^2
-\frac{q}{1+q}\left(\frac{3+5q+4q^2}{1+q+q^2}+4\alpha_1\right)\frac{\alpha_2}{([n+1]+\beta_1)^2}+2\left(\frac{1+\alpha_1}{[n+1]+\beta_1}\right)^2.
\end{eqnarray*}
\end{thm}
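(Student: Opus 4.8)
The plan is to follow the standard $K$-functional technique, but first to correct the operators so that they reproduce linear functions. From Lemma 2.1(ii) one reads off (using $[2]=1+q$) that
\[
K^{(\alpha,\beta)}_{n,q}(t;x)=a_nx+b_n ,
\]
with $a_n,b_n$ exactly as in the statement. Since $a_nx+b_n\neq x$ in general, I introduce the auxiliary operators
\[
\widehat{K}^{(\alpha,\beta)}_{n,q}(f;x):=K^{(\alpha,\beta)}_{n,q}(f;x)-f(a_nx+b_n)+f(x),
\]
which by Lemma 2.1(i)--(ii) satisfy $\widehat{K}^{(\alpha,\beta)}_{n,q}(1;x)=1$ and $\widehat{K}^{(\alpha,\beta)}_{n,q}(t;x)=x$, that is, they preserve every linear function. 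First I would split the error as
\[
\bigl|K^{(\alpha,\beta)}_{n,q}(f;x)-f(x)\bigr|\leq\bigl|\widehat{K}^{(\alpha,\beta)}_{n,q}(f;x)-f(x)\bigr|+\bigl|f(a_nx+b_n)-f(x)\bigr|,
\]
and estimate the last term directly by $\omega\bigl(f,|(a_n-1)x+b_n|\bigr)$, which is precisely the second summand in the theorem.

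For the first term I would use Taylor's formula with integral remainder for $g\in C^2[0,1]$, apply $\widehat{K}^{(\alpha,\beta)}_{n,q}$, and exploit the reproduction of linear functions to cancel the zeroth and first order contributions. Because $\widehat{K}^{(\alpha,\beta)}_{n,q}$ is not positive, the remainder must be estimated on both pieces entering its definition, which yields
\[
\bigl|\widehat{K}^{(\alpha,\beta)}_{n,q}(g;x)-g(x)\bigr|\leq\frac{\|g''\|}{2}\Bigl(K^{(\alpha,\beta)}_{n,q}\bigl((t-x)^2;x\bigr)+\bigl((a_n-1)x+b_n\bigr)^2\Bigr).
\]
Here I would denote the bracketed quantity by $\delta_n(x)$. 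Combining this with the elementary bound $|\widehat{K}^{(\alpha,\beta)}_{n,q}(f-g;x)-(f-g)(x)|=|K^{(\alpha,\beta)}_{n,q}(f-g;x)-(f-g)(a_nx+b_n)|\leq2\|f-g\|$ and then taking the infimum over $g\in C^2[0,1]$ gives
\[
\bigl|\widehat{K}^{(\alpha,\beta)}_{n,q}(f;x)-f(x)\bigr|\leq 2K_2\bigl(f,\tfrac14\delta_n(x)\bigr)\leq C\,\omega_2\bigl(f,\sqrt{\delta_n(x)}\bigr),
\]
where the last inequality uses the equivalence $K_2(f,\delta^2)\leq C_0\omega_2(f,\delta)$ recorded before the theorem together with the monotonicity of $\omega_2$ in its second argument. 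Adding the two pieces delivers the stated estimate with $C=2C_0$.

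The main obstacle is the explicit identification of $\delta_n(x)=K^{(\alpha,\beta)}_{n,q}\bigl((t-x)^2;x\bigr)+\bigl((a_n-1)x+b_n\bigr)^2$ with the closed form written in the theorem. This requires substituting the exact expressions of Lemma 2.1(ii)--(iii), collecting the coefficients of $x^2$, $x$ and $1$, and simplifying the resulting $q$-rational expressions (using $[2]=1+q$, $[3]=1+q+q^2$, $[2][3]=(1+q)(1+q+q^2)$ and $q[n-1]=[n]-1$). At a few places one also majorizes harmless factors such as $1-\frac1{[n]}\leq1$ and $\frac1{[2]},\frac1{[3]}\leq1$ to reach the displayed coefficients; in particular the two constant contributions combine and are bounded by the term $2\bigl(\frac{1+\alpha_1}{[n+1]+\beta_1}\bigr)^2$, which explains the factor $2$ appearing there. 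This bookkeeping is routine but lengthy, and is the only delicate part of the argument.
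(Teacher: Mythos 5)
Your proposal is correct and follows essentially the same route as the paper: the same auxiliary operator $\bar{K}_{n,q}^{(\alpha,\beta)}(f;x)=K_{n,q}^{(\alpha,\beta)}(f;x)+f(x)-f(a_nx+b_n)$ reproducing linear functions, the same Taylor-remainder bound in terms of $K_{n,q}^{(\alpha,\beta)}\bigl((t-x)^2;x\bigr)+\bigl((a_n-1)x+b_n\bigr)^2$, and the same $K$-functional/$\omega_2$ equivalence at the end. The only differences are immaterial constant-tracking (you retain the factor $\tfrac12$ in the remainder and get $2\|f-g\|$ where the paper uses $4\|f-g\|$), and the bookkeeping you defer is exactly the majorization the paper carries out, including the step $\alpha_1^2+\tfrac{2\alpha_1}{[2]}+\tfrac{1}{[3]}\leq(1+\alpha_1)^2$ that yields the factor $2$ in the constant term.
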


\begin{proof}
Let
\begin{equation*}
\bar{K}_{n,q}^{(\alpha,\beta)}(f;x)=K_{n,q}^{(\alpha,%
\beta)}(f;x)+f(x)-f(a_nx+b_n)
\end{equation*}
where $f\in C[0,1]$, $a_n=\frac{2q}{1+q}\frac{[n]+\beta_2}{[n+1]+\beta_1}$
and $b_n=\frac{1}{[n+1]+\beta_1}\left(\alpha_1+\frac{1}{1+q}\right)-%
\frac{2q}{1+q}\frac{\alpha_2}{[n+1]+\beta_1}$. Using the Taylor formula
\begin{equation*}
g(t)=g(x)+g^{\prime t}_{x}(t-s)g^{\prime \prime 2}[0,1],
\end{equation*}
we have
\begin{equation*}
\bar{K}_{n,q}^{(\alpha,\beta)}(g;x)=g(x)+{K}_{n,q}^{(\alpha,\beta)}\biggl{(}%
\int^{t}_{x}(t-s)g^{\prime \prime }(s)ds;x\biggl{)} -%
\int^{a_nx+b_n}_{x}(a_nx+b_n-s)g^{\prime \prime 2}[0,1].
\end{equation*}
Hence
\begin{eqnarray*}
\bigl{|}\bar{K}_{n,q}^{(\alpha,\beta)}(g;x)-g(x)\bigl{|} &\leq& {K}%
_{n,q}^{(\alpha,\beta)}\biggl{(}\biggl{|}\int^{t}_{x}(t-s)g^{\prime \prime
}(s)ds\biggl{|};x\biggl{)} + \biggl{|}\int^{a_nx+b_n}_{x}|a_nx+b_n-s)|~%
\bigl{|}g^{\prime \prime }(s)\bigl{|}ds\biggl{|}  \notag \\
&\leq & {K}_{n,q}^{(\alpha,\beta)}\bigl{(}(t-x)^2;x\bigl{)}\|g^{\prime
\prime }\|+(a_nx+b_n-x)^2\|g^{\prime \prime }\|\\
&=& \biggl{[}\biggl{\{}\frac{q[n-1]
}{[n]}\biggl{(}1+\frac{(q-1)^2}{[3]}+\frac{2(q-1)}{[2]}\biggl{)}\biggl{(}%
\frac{[n]+\beta_2}{[n+1]+\beta_1}\biggl{)}^2 -\frac{4q}{1+q}\frac{[n]+\beta_2%
}{[n+1]+\beta_1}+1\biggl{\}}x^2 \\
&& +\biggl{\{}\biggl{(}1+\frac{q^2-1}{[3]}+(2\alpha_1+1)\frac{2q}{1+q}%
\biggl{)}\frac{[n]+\beta_2}{([n+1]+\beta_1)^2} \\
&& -\frac{q[n-1]}{[n]}\frac{[2]\alpha_2}{[n]+\beta_2}\biggl{(}1+\frac{(q-1)^2%
}{[3]} +\frac{2(q-1)}{[2]}\biggl{)}\biggl{(}\frac{[n]+\beta_2}{[n+1]+\beta_1}%
\biggl{)}^2 \\
&& +\frac{2q}{1+q}\frac{2\alpha_2}{[n]+\beta_2}\biggl{(}\frac{[n]+\beta_2}{%
[n+1]+\beta_1}\biggl{)} -\frac2{[n+1]+\beta_1}\biggl{(}\alpha_1+\frac1{[2]}%
\biggl{)}\biggl{\}}x \\
&&+\frac{q^2[n-1]}{[n]}\biggl{(}1+\frac{(q-1)^2}{[3]}+\frac{2(q-1)}{[2]}%
\biggl{)}\biggl{(}\frac{\alpha_2}{[n+1]+\beta_1}\biggl{)}^2 \\
&&-\biggl{(}1+\frac{q^2-1}{[3]}+(2\alpha_1+1)\frac{2q}{1+q}\biggl{)}\frac{%
\alpha_2}{([n+1]+\beta_1)^2} +\frac1{([n+1]+\beta_1)^2}\biggl{(}\alpha_1^2+%
\frac{2\alpha_1}{[2]}+\frac1{[3]}\biggl{)}\\
&&+\left\{\frac{2q}{1+q}\frac{[n]+\beta_2}{[n+1]+\beta_1}x
+\frac{1}{[n+1]+\beta_1}\left(\alpha_1+\frac{1}{1+q}\right)-
\frac{2q}{1+q}\frac{\alpha_2}{[n+1]+\beta_1}-x\right\}^2\biggl{]}\|g^{\prime\prime}\|\\
&=&\left\{\left(1-\frac1{[n]}\right)\frac{2q^2(2q+1)}{[2][3]}
\left(\frac{[n]+\beta_2}{[n+1]+\beta_1}\right)^2-\frac{4q}{1+q}\frac{[n]+\beta_2}{[n+1]+\beta_1}
+1+\left(\frac{[n]+\beta_2}{[n+1]+\beta_1}-1\right)^2\right\}x^2\\
&&+\biggl{\{}\frac{q}{1+q}\left(\frac{3+5q+4q^2}{1+q+q^2}+4\alpha_1\right)\frac{[n]+\beta_2}{([n+1]+\beta_1)^2}
-\left(1-\frac1{[n]}\right)\frac{\alpha_2}{[n]+\beta_2}\frac{2q^2(2q+1)}{[3]}\left(\frac{[n]+\beta_2}{[n+1]+\beta_1}\right)^2\\
&&+\frac{4q}{1+q}\frac{\alpha_2}{[n+1]+\beta_1}-\frac{2}{[n+1]+\beta_1}\left(\alpha_1+\frac1{[2]}\right)
+2\left(\frac{[n]+\beta_2}{[n+1]+\beta_1}-1\right)\left(\frac{1+\alpha_1}{[n+1]+\beta_1}\right)\biggl{\}}x\\
&&q\left(1-\frac1{[n]}\right)\frac{2q^2(2q+1)}{[2][3]}\left(\frac{\alpha_2}{[n+1]+\beta_1}\right)^2
-\frac{q}{1+q}\left(\frac{3+5q+4q^2}{1+q+q^2}+4\alpha_1\right)\frac{\alpha_2}{([n+1]+\beta_1)^2}\\
&&+\frac1{([n+1]+\beta_1)^2}\biggl{(}\alpha_1^2+%
\frac{2\alpha_1}{[2]}+\frac1{[3]}\biggl{)}+\left(\frac{1+\alpha_1}{[n+1]+\beta_1}\right)^2\\
&\leq&\left\{\frac{1+2q+4q^2+5q^3}{1+2q+2q^2+q^3}\left(\frac{[n]+\beta_2}{[n+1]+\beta_1}\right)^2
-\frac{2(3q+1)}{1+q}\frac{[n]+\beta_2}{[n+1]+\beta_1}+2\right\}x^2\\
&&+\left\{\left(\frac{5+7q+6q^2}{1+q+q^2}+\frac{2q^2(2q+1)\alpha_2}{(1+q+q^2)[n]}+4\alpha_1\right)\frac{[n]+\beta_2}{([n+1]+\beta_1)^2}
+2\frac{\alpha_2}{[n+1]+\beta_1}\right\}x\\
&&+\frac{q^2(2q+1)}{1+q+q^2}\left(\frac{\alpha_2}{[n+1]+\beta_1}\right)^2
-\frac{q}{1+q}\left(\frac{3+5q+4q^2}{1+q+q^2}+4\alpha_1\right)\frac{\alpha_2}{([n+1]+\beta_1)^2}+2\left(\frac{1+\alpha_1}{[n+1]+\beta_1}\right)^2\\
& =&\delta_n(x)\|g^{\prime \prime }\|(3.3)
\end{eqnarray*}
Using (3.3) and the uniform boundedness of $\bar{K}_{n,q}^{(\alpha,\beta)}$,
we get
\begin{align*}
\bigl{|}{K}_{n,q}^{(\alpha,\beta)}(f;x)-f(x)\bigl{|}&\leq\bigl{|}\bar{K}%
_{n,q}^{(\alpha,\beta)}(f-g;x)\bigl{|} +\bigl{|}\bar{K}_{n,q}^{(\alpha,%
\beta)}(g;x)-g(x)\bigl{|}+\bigl{|}f(x)-g(x)\bigl{|}+\bigl{|}f(a_nx+b_n)-f(x)%
\bigl{|} \\
&\leq4\bigl{\|}f-g\bigl{\|}+\delta_n(x)\bigl{\|}g^{\prime \prime }\bigl{\|}%
+\omega\bigl{(}f,|(a_n-1)x+b_n|\bigl{)}
\end{align*}
Taking the infimum on the right hand side over all $g\in C^2[0,1]$, we
obtain
\begin{equation*}
\bigl{|}{K}_{n,q}^{(\alpha,\beta)}(f;x)-f(x)\bigl{|}\leq C\omega_2\bigl{(}f,%
\sqrt{\delta_n(x)}\bigl{)}+\omega\bigl{(}f,|(a_n-1)x+b_n|\bigl{)}.
\end{equation*}
This completes the proof.
\end{proof}

\begin{cor}
\label{3.4} Assume that $q_n\in(0,1)$, $q_n\to1$ as $n\to\infty$. For any $%
f\in C^2[0,1]$ we have
\begin{equation*}
\lim\limits_{n\to\infty}\|K_{n,q_n}^{(\alpha,\beta)}(f)-f\|=0.
\end{equation*}
\end{cor}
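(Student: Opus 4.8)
The plan is to read off the local estimate obtained in the proof of Theorem~\ref{3.3} and to show that its right-hand side tends to zero uniformly in $x$ once $q=q_n\to1$. Since $f\in C^2[0,1]$, the $K$-functional can be bypassed: taking $g=f$ in the chain of inequalities culminating in (3.3) already gives
\[
\bigl|\bar{K}_{n,q_n}^{(\alpha,\beta)}(f;x)-f(x)\bigr|\leq \delta_n(x)\,\|f''\|.
\]
Combining this with the identity $K_{n,q_n}^{(\alpha,\beta)}(f;x)-f(x)=\bigl(\bar{K}_{n,q_n}^{(\alpha,\beta)}(f;x)-f(x)\bigr)+\bigl(f(a_nx+b_n)-f(x)\bigr)$ and with $|f(a_nx+b_n)-f(x)|\leq\omega\bigl(f,|(a_n-1)x+b_n|\bigr)$, I obtain
\[
\bigl|K_{n,q_n}^{(\alpha,\beta)}(f;x)-f(x)\bigr|\leq \delta_n(x)\,\|f''\|+\omega\bigl(f,|(a_n-1)x+b_n|\bigr).
\]
It then suffices to send both terms to $0$ uniformly in $x$.

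Next I would record the elementary asymptotics that drive the argument. Since $q_n\to1$ forces $[n]_{q_n}\to\infty$, one has $\frac{[n]_{q_n}+\beta_2}{[n+1]_{q_n}+\beta_1}\to1$ and $\frac1{[n+1]_{q_n}+\beta_1}\to0$, while $\frac{2q_n}{1+q_n}\to1$. These immediately give $a_n\to1$ and $b_n\to0$, hence $|(a_n-1)x+b_n|\to0$ uniformly for $x\in[0,1]$; by the uniform continuity of $f$ the modulus term $\omega\bigl(f,|(a_n-1)x+b_n|\bigr)$ then tends to $0$ uniformly.

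The substantive step is to prove $\sup_x\delta_n(x)\to0$. I would view $\delta_n(x)$ as a quadratic in $x$ with $n$-dependent coefficients and check that each coefficient vanishes in the limit. The constant term and the coefficient of $x$ each carry a factor $\frac1{[n+1]_{q_n}+\beta_1}$ or its square and therefore tend to $0$. The delicate point, and the step I expect to be the main obstacle, is the coefficient of $x^2$,
\[
\frac{1+2q+4q^2+5q^3}{1+2q+2q^2+q^3}\left(\frac{[n]+\beta_2}{[n+1]+\beta_1}\right)^2-\frac{2(3q+1)}{1+q}\frac{[n]+\beta_2}{[n+1]+\beta_1}+2,
\]
which does not vanish termwise: as $q_n\to1$ the two rational prefactors converge to $2$ and $4$ respectively and the $[n]$-ratios converge to $1$, so the coefficient tends to $2-4+2=0$ purely by cancellation. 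Because $x$ ranges over the bounded set $[0,1]$ and every coefficient tends to $0$, it follows that $\sup_x\delta_n(x)\to0$.

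Finally, combining the uniform limits $\delta_n(x)\to0$ and $|(a_n-1)x+b_n|\to0$ with the boundedness of $\|f''\|$ and the continuity of $\omega(f,\cdot)$ at the origin, the right-hand side of the displayed bound tends to $0$ uniformly in $x$, which is exactly the assertion $\lim_{n\to\infty}\|K_{n,q_n}^{(\alpha,\beta)}(f)-f\|=0$. (Alternatively, one may cite Theorem~\ref{3.3} verbatim and argue that $\omega_2\bigl(f,\sqrt{\delta_n(x)}\bigr)\to0$ once $\delta_n(x)\to0$; the $C^2$ hypothesis is used only to control $\|f''\|$ in the cleaner estimate above.)
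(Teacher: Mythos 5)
Your proof is correct and follows the route the paper intends: Corollary~\ref{3.4} is stated there without proof, as an immediate consequence of Theorem~\ref{3.3}, and your argument supplies exactly the verification that makes this work --- namely that $q_n\to1$ forces $[n]_{q_n}\to\infty$, hence $a_n\to1$, $b_n\to0$, and $\sup_{x}|\delta_n(x)|\to0$ (the $x^2$-coefficient vanishing only through the cancellation $2-4+2=0$, which you correctly single out as the delicate point), so the right-hand side of the local estimate tends to zero uniformly. Your replacement of $C\omega_2\bigl(f,\sqrt{\delta_n(x)}\bigr)$ by $\delta_n(x)\,\|f''\|$ via the choice $g=f$ in estimate (3.3) is a harmless simplification permitted by the hypothesis $f\in C^2[0,1]$, not a genuinely different method.
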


Furthermore, we estimate the rate of convergence for smooth functions. For
this reason, we now state following general estimate theorem obtained by
Shisha and Mond \cite{shi} in terms of modulus of continuity.

\begin{thm}
\label{3.5} Let $[c,d]\subseteq[a,b]$ and $(L_n)_{n\in\mathbb{N}}$ be a
sequence of positive linear operators such that
\begin{equation*}
L_n:C[a,b]\to C[c,d]
\end{equation*}
If $f^{\prime }C[a,b]$ and $x\in[c,d]$, then we have
\begin{align*}
|L_n(f;x)-f(x)|&\leq|f(x)||L_n(1;x)-1|+|f^{\prime }(x)||L_n(t-x;x)|+\sqrt{L_n%
\bigl{(}(t-x)^2;x\bigl{)}} \\
&~~~\times\biggl{\{}\sqrt{L_n(1;x)}+\frac1\delta\sqrt{L_n\bigl{(}(t-x)^2;x%
\bigl{)}}\biggl{\}}\omega(f^{\prime }\delta).
\end{align*}
where $\omega$ is the modulus of continuity of the function $f$ defined by
\begin{equation*}
\omega(f;\delta)=\sup\bigl{\{}|f(x)-f(y)|:~x,y\in[0,1],~|x-y|\leq\delta%
\bigl{\}}
\end{equation*}
for any positive number $\delta$.
\end{thm}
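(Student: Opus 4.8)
The plan is to establish this classical Shisha--Mond estimate by splitting $L_n(f;x)-f(x)$ into a zeroth-order, a first-order, and a remainder term, and then controlling the remainder through the modulus of continuity of $f'$ together with the Cauchy--Schwarz inequality for positive linear operators. Throughout I would use only linearity and positivity of $L_n$, so no special feature of the operators in this paper is needed.

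First, by linearity of $L_n$ I would write
\[
L_n(f;x)-f(x)=f(x)\bigl(L_n(1;x)-1\bigr)+L_n\bigl(f(t)-f(x);x\bigr),
\]
so that the first contribution is immediately bounded by $|f(x)|\,|L_n(1;x)-1|$. Since $f'\in C[a,b]$, I would use the exact identity $f(t)-f(x)=f'(x)(t-x)+\Phi(t)$, where $\Phi(t)=\int_x^t\bigl(f'(s)-f'(x)\bigr)\,ds$. Applying $L_n$ and using linearity again separates off the first-order term $f'(x)L_n(t-x;x)$, bounded by $|f'(x)|\,|L_n(t-x;x)|$, and leaves $L_n(\Phi;x)$ to estimate.

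The heart of the argument is bounding $L_n(\Phi;x)$. For $s$ lying between $x$ and $t$ one has $|f'(s)-f'(x)|\le\omega(f';|s-x|)\le\omega(f';|t-x|)$, since $\omega$ is nondecreasing; hence $|\Phi(t)|\le|t-x|\,\omega(f';|t-x|)$. Invoking the standard property $\omega(f';\lambda\delta)\le(1+\lambda)\omega(f';\delta)$ with $\lambda=|t-x|/\delta$, I obtain the pointwise bound
\[
|\Phi(t)|\le\Bigl(|t-x|+\tfrac1\delta(t-x)^2\Bigr)\omega(f';\delta).
\]
By positivity and monotonicity of $L_n$ this yields $L_n(|\Phi|;x)\le\omega(f';\delta)\bigl[L_n(|t-x|;x)+\tfrac1\delta L_n((t-x)^2;x)\bigr]$.

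Finally I would apply the Cauchy--Schwarz inequality for positive linear operators, namely $L_n(|t-x|;x)\le\sqrt{L_n(1;x)}\,\sqrt{L_n((t-x)^2;x)}$ (take $g=|t-x|$, $h=1$ in $L_n(gh)^2\le L_n(g^2)L_n(h^2)$), to recast the first summand, giving
\[
L_n(|\Phi|;x)\le\omega(f';\delta)\sqrt{L_n((t-x)^2;x)}\Bigl\{\sqrt{L_n(1;x)}+\tfrac1\delta\sqrt{L_n((t-x)^2;x)}\Bigr\}.
\]
Combining the three bounds produces the asserted inequality. The only genuinely delicate points are the Cauchy--Schwarz step and the correct use of the subadditivity-type property of $\omega$ applied to $f'$; everything else reduces to linearity and the fundamental theorem of calculus.
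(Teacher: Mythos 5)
Your proposal is correct, but there is in fact nothing in the paper to compare it against: the authors state this result \emph{without proof}, explicitly presenting it as a known estimate of Shisha and Mond (cited as such in the text), and then use it as a black box in the proof of the theorem that follows it. Your argument is, in essence, the original Shisha--Mond proof, and every step checks out: the splitting $L_n(f;x)-f(x)=f(x)\bigl(L_n(1;x)-1\bigr)+L_n\bigl(f(t)-f(x);x\bigr)$; the exact first-order identity with integral remainder $\Phi(t)=\int_x^t\bigl(f'(s)-f'(x)\bigr)\,ds$, legitimate because $f'\in C[a,b]$; the pointwise bound $|\Phi(t)|\le\bigl(|t-x|+\tfrac{1}{\delta}(t-x)^2\bigr)\omega(f';\delta)$, obtained from the monotonicity of $\omega$ together with $\omega(f';\lambda\delta)\le(1+\lambda)\omega(f';\delta)$; and the Cauchy--Schwarz inequality for positive linear functionals, which converts $L_n(|t-x|;x)$ into $\sqrt{L_n(1;x)}\,\sqrt{L_n\bigl((t-x)^2;x\bigr)}$ and yields exactly the bracketed factor in the statement. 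Note also that your reading silently repairs the typographical defects in the quoted statement --- it should read $f'\in C[a,b]$ rather than $f'C[a,b]$, the final factor should be $\omega(f';\delta)$ rather than $\omega(f'\delta)$, and the modulus involved is that of $f'$ on $[a,b]$, not of $f$ on $[0,1]$ --- and these corrections are the only sensible interpretation, since they are what the original Shisha--Mond theorem asserts and what the paper's later application (to $K_{n,q}^{(\alpha,\beta)}$ with $f\in C^1[0,1]$) actually requires.
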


\begin{thm}
\label{3.6} For any $f\in C^1[0,1]$ and each $x\in\left[\frac{\alpha_2}{%
[n]+\beta_2},\frac{[n]+\alpha_2}{[n]+\beta_2}\right]$, we have
\begin{equation*}
\bigl{|}K_{n,q}^{(\alpha,\beta)}-f(x)\bigl{|}\leq\biggl{|}\biggl{(}\frac{2q}{%
1+q}\frac{[n]+\beta_2}{[n+1]+\beta_1}-1\biggl{)}x +\frac{1+\alpha_1+q%
\alpha_1-2q\alpha_2}{(1+q)([n+1]+\beta_1)}\biggl{|}|f^{\prime }(x)|+2\sqrt{%
\delta_n(x)}\omega(f^{\prime },\sqrt{\delta_n(x)})
\end{equation*}
\end{thm}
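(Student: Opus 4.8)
The plan is to invoke the Shisha--Mond estimate of Theorem 3.5 directly with $L_n=K_{n,q}^{(\alpha,\beta)}$ acting from $C[0,1]$ to $C[c,d]$ on the interval $\left[\frac{\alpha_2}{[n]+\beta_2},\frac{[n]+\alpha_2}{[n]+\beta_2}\right]$, and then to feed in the moment data already established in Lemma 2.1 and Lemma 2.2. Since Lemma 2.1(i) gives $K_{n,q}^{(\alpha,\beta)}(1;x)=1$, the term $|f(x)|\,|L_n(1;x)-1|$ vanishes and $\sqrt{L_n(1;x)}=1$, so the estimate collapses to
\begin{equation*}
|K_{n,q}^{(\alpha,\beta)}(f;x)-f(x)|\leq |f'(x)|\,|K_{n,q}^{(\alpha,\beta)}(t-x;x)| + \sqrt{K_{n,q}^{(\alpha,\beta)}((t-x)^2;x)}\left(1+\frac1\delta\sqrt{K_{n,q}^{(\alpha,\beta)}((t-x)^2;x)}\right)\omega(f',\delta).
\end{equation*}

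First I would evaluate the first moment. Writing $[2]=1+q$ in the formula of Lemma 2.1(ii) and subtracting $x$ yields, after a short simplification of the linear expression,
\begin{equation*}
K_{n,q}^{(\alpha,\beta)}(t-x;x)=\left(\frac{2q}{1+q}\frac{[n]+\beta_2}{[n+1]+\beta_1}-1\right)x+\frac{1+\alpha_1+q\alpha_1-2q\alpha_2}{(1+q)([n+1]+\beta_1)},
\end{equation*}
which is precisely the quantity multiplying $|f'(x)|$ in the assertion. This handles the first summand without any further work.

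Next I would control the second moment by $\delta_n(x)$. The key observation is that the quantity $\delta_n(x)$ introduced in Theorem 3.3 is, by construction, an upper bound for $K_{n,q}^{(\alpha,\beta)}((t-x)^2;x)+(a_nx+b_n-x)^2$; discarding the nonnegative square term therefore gives $K_{n,q}^{(\alpha,\beta)}((t-x)^2;x)\leq\delta_n(x)$. Substituting the special value $\delta=\sqrt{\delta_n(x)}$ into the displayed estimate then forces the bracket to satisfy $1+\frac1{\sqrt{\delta_n(x)}}\sqrt{K_{n,q}^{(\alpha,\beta)}((t-x)^2;x)}\leq 2$, while the outer radical obeys $\sqrt{K_{n,q}^{(\alpha,\beta)}((t-x)^2;x)}\leq\sqrt{\delta_n(x)}$ and, by monotonicity of the modulus, $\omega(f',\sqrt{K_{n,q}^{(\alpha,\beta)}((t-x)^2;x)})\leq\omega(f',\sqrt{\delta_n(x)})$. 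Combining these three facts produces the claimed term $2\sqrt{\delta_n(x)}\,\omega(f',\sqrt{\delta_n(x)})$ and completes the bound.

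The only genuine bookkeeping is the inequality $K_{n,q}^{(\alpha,\beta)}((t-x)^2;x)\leq\delta_n(x)$, which I expect to be the main (though routine) obstacle: it requires tracking the chain of upper bounds used in the proof of Theorem 3.3 and verifying that the extra $(a_nx+b_n-x)^2$ contribution appearing there is exactly the nonnegative slack one is allowed to drop here. Everything else is a direct substitution into Theorem 3.5 together with the monotonicity of $\omega$.
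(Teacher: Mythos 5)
Your proposal is correct and takes essentially the same approach as the paper: apply the Shisha--Mond estimate with $L_n(1;x)=1$, compute the first-moment term from Lemma 2.1(ii), and choose $\delta=\sqrt{\delta_n(x)}$ so the bracket contributes the factor $2$. The only divergence is notational: the paper simply defines $\delta_n(x):=K_{n,q}^{(\alpha,\beta)}\bigl((t-x)^2;x\bigr)$ inside the proof (so equality holds and no comparison is needed), whereas you read $\delta_n(x)$ as the quantity from the local approximation theorem and insert the additional (valid, by that theorem's own proof) bound $K_{n,q}^{(\alpha,\beta)}\bigl((t-x)^2;x\bigr)\leq\delta_n(x)$; either reading yields the stated inequality.
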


\begin{proof}
In view of Lemma 2.1, Lemma 2.2 $\&$ Theorem 4.3, and if we choose $\delta=\sqrt{%
\delta_n(x)}=\sqrt{K_{n,q}^{(\alpha,\beta)}\bigl{(}(t-x)^2;x\bigl{)}}$, we
have
\begin{align*}
\bigl{|}K_{n,q}^{(\alpha,\beta)}-f(x)\bigl{|}&\leq\biggl{|}\biggl{(}\frac{2q%
}{1+q}\frac{[n]+\beta_2}{[n+1]+\beta_1}-1\biggl{)}x +\frac{%
1+\alpha_1+q\alpha_1-2q\alpha_2}{(1+q)([n+1]+\beta_1)}\biggl{|}|f^{\prime
}(x)| \\
&+\sqrt{K_{n,q}^{(\alpha,\beta)}\bigl{(}(t-x)^2;x\bigl{)}} \times\biggl{\{}%
1+\frac1{\delta}\sqrt{K_{n,q}^{(\alpha,\beta)}\bigl{(}(t-x)^2;x\bigl{)}}%
\biggl{\}}\omega(f^{\prime },\delta) \\
&=\biggl{|}\biggl{(}\frac{2q}{1+q}\frac{[n]+\beta_2}{[n+1]+\beta_1}-1%
\biggl{)}x +\frac{1+\alpha_1+q\alpha_1-2q\alpha_2}{(1+q)([n+1]+\beta_1)}%
\biggl{|}|f^{\prime }(x)|+2\sqrt{\delta_n(x)}\omega(f^{\prime },\sqrt{%
\delta_n(x)}).
\end{align*}
\end{proof}

\section{Voronovskaja type Theorem}

Next we prove Voronovskaja type result for Kantorovich type $q$%
-Bernstein-Stancu operators.\newline

\begin{thm}
\label{3.7} Assume that $q=q_n\in(0,1)$, $q_n\to1$ and $q_n^n\to a~(0\leq a<1)$ as $n\to
\infty$. For any $f\in C^2[0,1]$ the following equality holds\newline
\begin{equation*}
\lim\limits_{n\to\infty}[n]_{q_n}\left(K^{(\alpha,%
\beta)}_{n,q_n}(f;x)-f(x)\right) =f^{\prime }(x)\biggl{(}-\frac{%
1+a+2(\beta_1-\beta_2)}{2}x+\frac{1+2(\alpha_1-\alpha_2)}{2}\biggl{)}%
+\frac12f^{\prime \prime }(x)\left((a+2\beta_1-2\beta_2)x^2+x\right)
\end{equation*}
uniformly on $x\in\left[\frac{\alpha_2}{[n]_{q_n}+\beta_2},\frac{[n]_{q_n}+\alpha_2}{%
[n]_{q_n}+\beta_2}\right].$
\end{thm}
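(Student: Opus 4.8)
The plan is to prove this Voronovskaja-type result via the standard Taylor-expansion argument, leaning on the two limit formulas already established in Lemma~\ref{2.3}. For $f\in C^2[0,1]$ fixed and $x$ in the admissible interval, Taylor's theorem with remainder gives
\begin{equation*}
f(t)=f(x)+f'(x)(t-x)+\tfrac12 f''(x)(t-x)^2+r(t,x)(t-x)^2,
\end{equation*}
where $r(t,x)\to0$ as $t\to x$ and $r(\cdot,x)$ is bounded on $[0,1]$. Applying the linear operator $K^{(\alpha,\beta)}_{n,q_n}$ to both sides and using the linearity together with $K^{(\alpha,\beta)}_{n,q_n}(1;x)=1$ from Lemma~\ref{2.1}(i), I obtain
\begin{equation*}
K^{(\alpha,\beta)}_{n,q_n}(f;x)-f(x)=f'(x)\,K^{(\alpha,\beta)}_{n,q_n}(t-x;x)+\tfrac12 f''(x)\,K^{(\alpha,\beta)}_{n,q_n}\bigl((t-x)^2;x\bigr)+K^{(\alpha,\beta)}_{n,q_n}\bigl(r(t,x)(t-x)^2;x\bigr).
\end{equation*}

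Next I would multiply through by $[n]_{q_n}$ and pass to the limit term by term. The first two terms converge directly: by Lemma~\ref{2.3}, $[n]_{q_n}K^{(\alpha,\beta)}_{n,q_n}(t-x;x)\to-\frac{1+a+2(\beta_1-\beta_2)}{2}x+\frac{1+2(\alpha_1-\alpha_2)}{2}$ and $[n]_{q_n}K^{(\alpha,\beta)}_{n,q_n}\bigl((t-x)^2;x\bigr)\to(a+2\beta_1-2\beta_2)x^2+x$, which assemble into exactly the two stated main terms with their coefficients $f'(x)$ and $\tfrac12 f''(x)$. So the entire content of the theorem reduces to showing the remainder term vanishes in the limit, namely
\begin{equation*}
\lim_{n\to\infty}[n]_{q_n}\,K^{(\alpha,\beta)}_{n,q_n}\bigl(r(t,x)(t-x)^2;x\bigr)=0
\end{equation*}
uniformly in $x$.

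The main obstacle is precisely this remainder estimate. I would handle it by the Cauchy--Schwarz inequality for the positive linear operator:
\begin{equation*}
\bigl|[n]_{q_n}K^{(\alpha,\beta)}_{n,q_n}\bigl(r(t,x)(t-x)^2;x\bigr)\bigr|
\le [n]_{q_n}\sqrt{K^{(\alpha,\beta)}_{n,q_n}\bigl(r(t,x)^2;x\bigr)}\,\sqrt{K^{(\alpha,\beta)}_{n,q_n}\bigl((t-x)^4;x\bigr)}.
\end{equation*}
Here $K^{(\alpha,\beta)}_{n,q_n}\bigl(r(t,x)^2;x\bigr)\to0$ as $n\to\infty$: since $r(\cdot,x)^2$ is bounded and continuous with $r(x,x)=0$, Theorem~\ref{3.1} (the Korovkin-type convergence) gives $K^{(\alpha,\beta)}_{n,q_n}\bigl(r(t,x)^2;x\bigr)\to r(x,x)^2=0$. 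The surviving factor requires controlling the fourth central moment, and one shows $[n]_{q_n}^2\,K^{(\alpha,\beta)}_{n,q_n}\bigl((t-x)^4;x\bigr)$ stays bounded; this is the analogue of the second-moment computation in Lemma~\ref{2.2} carried one power higher, using the expansions of $K^{(\alpha,\beta)}_{n,q_n}(t^j;x)$ for $j\le4$. Writing $[n]_{q_n}=\sqrt{[n]_{q_n}^2}$ and distributing it across the two square roots, the product is bounded by $\sqrt{K^{(\alpha,\beta)}_{n,q_n}(r^2;x)}\cdot\sqrt{[n]_{q_n}^2 K^{(\alpha,\beta)}_{n,q_n}((t-x)^4;x)}$, whose first factor tends to $0$ and second factor is bounded; hence the whole remainder tends to $0$. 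Combining the three limits yields the claimed identity, and the uniformity in $x$ follows because all the moment convergences in Lemma~\ref{2.3} and the boundedness of the fourth moment hold uniformly on the admissible interval.
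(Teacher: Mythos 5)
Your proposal follows essentially the same route as the paper's own proof: Taylor expansion with Peano remainder, application of the operator and of Lemma~2.3 for the first and second moments, and Cauchy--Schwarz to reduce the remainder term to $K^{(\alpha,\beta)}_{n,q_n}(r^2;x)\to 0$ times a scaled fourth central moment. If anything, you are more explicit than the paper, which asserts the vanishing of $[n]_{q_n}K^{(\alpha,\beta)}_{n,q_n}\bigl(r(t;x)(t-x)^2;x\bigr)$ ``immediately'' without even naming the needed boundedness of $[n]_{q_n}^2K^{(\alpha,\beta)}_{n,q_n}\bigl((t-x)^4;x\bigr)$ that you correctly identify (though, like the paper, you do not carry out that fourth-moment computation).
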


\begin{proof}
Let $f\in C^2[0,1]$ and $x\in[0,1]$ be fixed. By the Taylor formula we may
write
\begin{equation}  \label{2.5}
f(t)=f(x)+f^{\prime }(x)(t-x)+\frac12f^{\prime \prime 2}+r(t;x)(t-x)^2,
\end{equation}
where $r(t;x)$ is the Peano form of remainder, $r(.;x)\in C[0,1]$ and $%
\lim\limits_{t\to x}=0$. Applying $K^{(\alpha,\beta)}_{n,q_n}(f;x)$ on both
sides of (3.4), we obtain
\begin{eqnarray*}
[n]_{q_n}\bigl{(}K^{(\alpha,\beta)}_{n,q_n}(f;x)-f(x)\bigl{)}&=&f^{\prime
}(x)[n]_{q_n}K^{(\alpha,\beta)}_{n,q_n}\bigl{(}(t-x);x\bigl{)}
+\frac12f^{\prime \prime }(x)[n]_{q_n}K^{(\alpha,\beta)}_{n,q_n}\bigl{(}%
(t-x)^2;x\bigl{)} \\
&& +[n]_{q_n}K^{(\alpha,\beta)}_{n,q_n}\bigl{(}r(t;x)(t-x)^2;x\bigl{)}.
\end{eqnarray*}
By the Cauchy-Schwartz inequality, we have
\begin{equation}  \label{2.6}
K^{(\alpha,\beta)}_{n,q_n}\bigl{(}r(t;x)(t-x)^2;x\bigl{)} \leq\sqrt{%
K^{(\alpha,\beta)}_{n,q_n}\bigl{(}r^2(t;x);x\bigl{)}}\sqrt{%
K^{(\alpha,\beta)}_{n,q_n}\bigl{(}(t-x)^4;x\bigl{)}}
\end{equation}
Observe that $r^2(x,x)=0$ and $r^2(.;x)\in C[0,1]$. Then it follows from
Corollary 3.4 that
\begin{equation}  \label{3.5}
\lim\limits_{n\to\infty}K^{(\alpha,\beta)}_{n,q_n}\bigl{(}r^2(t;x);x\bigl{)}%
=r^2(x,x)=0
\end{equation}
uniformly with respect to $x\in\left[\frac{\alpha_2}{[n]_{q_n}+\beta_2},\frac{%
[n]_{q_n}+\alpha_2}{[n]_{q_n}+\beta_2}\right]$. Now from (3.5), (3.6) and Lemma 3.4 we
get immediately
\begin{equation*}
\lim\limits_{n\to\infty}[n]_{q_n}K^{(\alpha,\beta)}_{n,q_n}\left(%
r(t;x)(t-x)^2;x\right)=0.
\end{equation*}
The proof is completed.
\end{proof}

$~~~~~~~~~~$\parindent=8mmNow we give the rate of convergence of the
operators $K^{(\alpha,\beta)}_{n,q}$ in terms of the elements of the usual
Lipschitz class $Lip_M(\alpha)$.\newline
$~~~~~~~~~~$Let $f\in C[0,1]$, $M>0$ and $0<\alpha\leq1$.We recall that $f$
belongs to the class $Lip_M(\alpha)$ if the inequality
\begin{equation*}
|f(t)-f(x)|\leq M|t-x|^\alpha~~~(t,x\in[0,1])
\end{equation*}
is satisfied.\newline

\begin{thm}
\label{3.8} Let $q=q_n\in(0,1)$ such that $\lim\limits_{n\to\infty}q_n=1$.
Then for each $f\in Lip_M(\alpha)$ we have
\begin{equation*}
\|K^{(\alpha,\beta)}_{n,q_n}-f(x)\|\leq M\delta_n^{\alpha}
\end{equation*}
where $\|.\|$ is the supremum norm over $\left[\frac{\alpha_2}{%
[n]_{q_n}+\beta_2},\frac{[n]_{q_n}+\alpha_2}{[n]_{q_n}+\beta_2}\right]$ and
\begin{eqnarray*}
\bigl{\|}K^{(\alpha,\beta)}_{n,q_n}(f)-f\bigl{\|}&\leq& M \biggl{[}\frac{2q_n^2(2q_n+1)}{[2]_{q_n}[3]_{q_n}}\frac{[n]_{q_n}([n]_{q_n}+\alpha_2)}{([n+1]_{q_n}+\beta_1)^2}
   +\frac{q_n}{1+q_n}\left(\frac{3+5q_n+4q_n^2}{1+q_n+q_n^2}+4\alpha_1\right)\frac{[n]_{q_n}}{([n+1]_{q_n}+\beta_1)^2}\\
  &&-\frac{2}{1+q_n}\frac{(2q_n[n]_{q_n}+2\alpha_1+1)([n]_{q_n}+\alpha_2)}{([n+1]_{q_n}+\beta_1)([n]_{q_n}+\beta_2)}
  +\left(\frac{[n]_{q_n}+\alpha_2}{[n]_{q_n}+\beta_2}\right)^2+\left(\frac{1+\alpha_1}{[n+1]_{q_n}+\beta_1}\right)^2\biggl{]}^{\frac{\alpha}{2}}.
\end{eqnarray*}
\end{thm}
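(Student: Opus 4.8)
The plan is to estimate the error directly using the Lipschitz property of $f$ together with the positivity and linearity of the operators $K^{(\alpha,\beta)}_{n,q_n}$, and then apply the H\"older inequality in its integral (operator) form. Since $K^{(\alpha,\beta)}_{n,q_n}$ reproduces constants by Lemma 2.1(i), I first write
\begin{equation*}
\bigl{|}K^{(\alpha,\beta)}_{n,q_n}(f;x)-f(x)\bigl{|}=\bigl{|}K^{(\alpha,\beta)}_{n,q_n}(f(t)-f(x);x)\bigl{|}\leq K^{(\alpha,\beta)}_{n,q_n}\bigl{(}|f(t)-f(x)|;x\bigl{)}.
\end{equation*}
Using $f\in Lip_M(\alpha)$ this is bounded by $M\,K^{(\alpha,\beta)}_{n,q_n}\bigl{(}|t-x|^{\alpha};x\bigl{)}$, so the whole estimate reduces to controlling the fractional moment $K^{(\alpha,\beta)}_{n,q_n}(|t-x|^{\alpha};x)$.

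The central tool is the H\"older inequality applied to the positive linear functional $g\mapsto K^{(\alpha,\beta)}_{n,q_n}(g;x)$. Taking exponents $p=\tfrac{2}{\alpha}$ and $q'=\tfrac{2}{2-\alpha}$ (conjugate since $\tfrac1p+\tfrac1{q'}=1$), I would write $|t-x|^{\alpha}=|t-x|^{\alpha}\cdot 1$ and obtain
\begin{equation*}
K^{(\alpha,\beta)}_{n,q_n}\bigl{(}|t-x|^{\alpha};x\bigl{)}\leq\Bigl{(}K^{(\alpha,\beta)}_{n,q_n}\bigl{(}(t-x)^{2};x\bigl{)}\Bigr{)}^{\alpha/2}\Bigl{(}K^{(\alpha,\beta)}_{n,q_n}(1;x)\Bigr{)}^{(2-\alpha)/2}.
\end{equation*}
By Lemma 2.1(i) the second factor equals $1$, so the right-hand side collapses to $\bigl{(}K^{(\alpha,\beta)}_{n,q_n}((t-x)^{2};x)\bigr{)}^{\alpha/2}$. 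Combining with the previous step gives
\begin{equation*}
\bigl{|}K^{(\alpha,\beta)}_{n,q_n}(f;x)-f(x)\bigl{|}\leq M\Bigl{(}K^{(\alpha,\beta)}_{n,q_n}\bigl{(}(t-x)^{2};x\bigl{)}\Bigr{)}^{\alpha/2}.
\end{equation*}

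Finally I would invoke the explicit second-moment bound from Lemma 2.2: since $K^{(\alpha,\beta)}_{n,q_n}((t-x)^{2};x)\leq\delta_n^{2}$, with $\delta_n^{2}$ the bracketed expression displayed in Theorem 3.2 (now written with the subscript $q_n$ throughout), raising to the power $\alpha/2$ yields $M\,\delta_n^{\alpha}$. Passing to the supremum over $x\in\bigl{[}\tfrac{\alpha_2}{[n]_{q_n}+\beta_2},\tfrac{[n]_{q_n}+\alpha_2}{[n]_{q_n}+\beta_2}\bigr{]}$, which is where the operators are positive and where Lemma 2.2 holds, produces exactly the claimed displayed inequality. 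The only genuine subtlety is verifying that the H\"older step is legitimate for the operator realized as a weighted $q$-integral sum; this is automatic because $K^{(\alpha,\beta)}_{n,q_n}$ is a positive linear functional that fixes $1$, so it behaves like an integral against a probability measure and the discrete/integral H\"older inequality applies verbatim. Everything else is substitution of the Lemma 2.2 bound, so there is no real computational obstacle beyond bookkeeping with the $q_n$-subscripts.
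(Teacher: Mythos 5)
Your proof is correct and follows essentially the same route as the paper: Lipschitz bound, then H\"older with exponents $2/\alpha$ and $2/(2-\alpha)$ together with $K^{(\alpha,\beta)}_{n,q_n}(1;x)=1$, then the uniform second-moment bound of Lemma 2.2. The only difference is presentational: the paper applies H\"older twice (once to the $q$-integral and once to the sum over $k$, splitting the weights as $P_{n,k}^{(\alpha,\beta)}(x)=P_{n,k}^{(\alpha,\beta)}(x)^{\alpha/2}P_{n,k}^{(\alpha,\beta)}(x)^{(2-\alpha)/2}$) and then evaluates its $x$-dependent bound at the right endpoint, whereas you apply H\"older once to the whole positive linear functional, which is legitimate here since that functional is integration against a probability measure.
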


\begin{proof}
. Let us denote $P_{n,k}^{(\alpha ,\beta )}(x)=\biggl{(}\frac{%
[n]_{q_{n}}+\beta _{2}}{[n]_{q_{n}}}\biggl{)}^{n}~\sum\limits_{k=0}^{n}\left[
\begin{array}{c}
n \\
k%
\end{array}%
\right] _{q_{n}}\biggl{(}x-\frac{\alpha _{2}}{[n]_{q_{n}}+\beta _{2}}%
\biggl{)}_{q_{n}}^{k}~\biggl{(}\frac{[n]_{q_{n}}+\alpha _{2}}{%
[n]_{q_{n}}+\beta _{2}}-x\biggl{)}_{q_{n}}^{n-k}$. Then by the monotonicity
of the operators $K_{n,q_n}^{(\alpha ,\beta )}$, we can write
\begin{eqnarray*}
\bigl{|}K_{n,q_{n}}^{(\alpha ,\beta )}-f(x)\bigl{|} &\leq
&K_{n,q_{n}}^{(\alpha ,\beta )}\bigl{(}|f(t)-f(x)|;x\bigl{)} \\
&\leq &\sum\limits_{k=0}^{n}P_{n,k}^{(\alpha ,\beta )}(x)\int_{0}^{1}%
\biggl{|}f\biggl{(}\frac{[k]_{q_{n}}+q_n^{k}t+\alpha _{1}}{[n+1]_{q_{n}}+\beta
_{1}}\biggl{)-f(x)\biggl{|}}d_{q_n}t \\
&\leq &M\sum\limits_{k=0}^{n}P_{n,k}^{(\alpha ,\beta )}(x)\int_{0}^{1}%
\biggl{|}\frac{[k]_{q_{n}}+q_n^{k}t+\alpha _{1}}{[n+1]_{q_{n}}+\beta _{1}}-x%
\biggl{|}^{\alpha }d_{q_n}t.
\end{eqnarray*}

On the other hand, by using the Hölder's inequality for integrals with $p=%
\frac{2}{\alpha}$ and $q=\frac{2}{2-\alpha}$, we have

\begin{eqnarray*}
\bigl{|}K^{(\alpha,\beta)}_{n,q_n}-f(x)\bigl{|} &\leq&
M\sum\limits_{k=0}^{n}P^{(\alpha,\beta)}_{n,k}(x)\biggl{\{}\int^{1}_{0} %
\biggl{(}\frac{[k]_{q_n}+q_n^kt+\alpha_1}{[n+1]_{q_n}+\beta_1}-x\biggl{)}^2d_{q_n}t%
\biggl{\}}^{\frac{\alpha}{2}}\biggl{\{}\int^{1}_{0}1~d_{q_n}t\biggl{\}}^\frac{%
2-\alpha}{2} \\
&=& M\sum\limits_{k=0}^{n}\biggl{\{}P^{(\alpha,\beta)}_{n,k}(x)~\int^{1}_{0} %
\biggl{(}\frac{[k]_{q_n}+q_n^kt+\alpha_1}{[n+1]_{q_n}+\beta_1}-x\biggl{)}^2d_{q_n}t%
\biggl{\}}^{\frac{\alpha}{2}}P^{(\alpha,\beta)}_{n,k}(x)^\frac{2-\alpha}{2}.
\end{eqnarray*}
Now again applying the Hölder's inequality for the sum with $p=\frac2{\alpha}
$ and $q=\frac2{2-\alpha}$ and taking into consideration Lemma 2.1(i) and
Lemma 2.2, we have
\begin{eqnarray*}
\bigl{|}K^{(\alpha,\beta)}_{n,q_n}(f;x)-f(x)\bigl{|} &\leq& M\biggl{(}%
K^{(\alpha,\beta)}_{n,q_n}\bigl{(}(t-x)^2;x\bigl{)}\biggl{)}^{\frac{\alpha}{2%
}}\biggl{(}K^{(\alpha,\beta)}_{n,q_n}\bigl{(}1;x\bigl{)}\biggl{)}^{\frac{%
2-\alpha}{2}} \\
&\leq& M\biggl{[}\biggl{\{}\frac{q_n[n-1]_{q_n}%
}{[n]_{q_n}}\biggl{(}1+\frac{(q_n-1)^2}{[3]_{q_n}}+\frac{2(q_n-1)}{[2]_{q_n}}\biggl{)}\biggl{(}%
\frac{[n]_{q_n}+\beta_2}{[n+1]_{q_n}+\beta_1}\biggl{)}^2 -\frac{4q_n}{1+q_n}\frac{[n]_{q_n}+\beta_2%
}{[n+1]_{q_n}+\beta_1}+1\biggl{\}}x^2 \\
&& +\biggl{\{}\biggl{(}1+\frac{q_n^2-1}{[3]_{q_n}}+(2\alpha_1+1)\frac{2q_n}{1+q_n}%
\biggl{)}\frac{[n]_{q_n}+\beta_2}{([n+1]_{q_n}+\beta_1)^2} \\
&& -\frac{q_n[n-1]_{q_n}}{[n]_{q_n}}\frac{[2]_{q_n}\alpha_2}{[n]_{q_n}+\beta_2}\biggl{(}1+\frac{(q_n-1)^2%
}{[3]_{q_n}} +\frac{2(q_n-1)}{[2]_{q_n}}\biggl{)}\biggl{(}\frac{[n]_{q_n}+\beta_2}{[n+1]_{q_n}+\beta_1}%
\biggl{)}^2 \\
&& +\frac{2q_n}{1+q_n}\frac{2\alpha_2}{[n]_{q_n}+\beta_2}\biggl{(}\frac{[n]_{q_n}+\beta_2}{%
[n+1]_{q_n}+\beta_1}\biggl{)} -\frac2{[n+1]_{q_n}+\beta_1}\biggl{(}\alpha_1+\frac1{[2]_{q_n}}%
\biggl{)}\biggl{\}}x \\
&&+\frac{q_n^2[n-1]_{q_n}}{[n]_{q_n}}\biggl{(}1+\frac{(q_n-1)^2}{[3]_{q_n}}+\frac{2(q_n-1)}{[2]_{q_n}}%
\biggl{)}\biggl{(}\frac{\alpha_2}{[n+1]_{q_n}+\beta_1}\biggl{)}^2 \\
&&-\biggl{(}1+\frac{q_n^2-1}{[3]_{q_n}}+(2\alpha_1+1)\frac{2q_n}{1+q_n}\biggl{)}\frac{%
\alpha_2}{([n+1]_{q_n}+\beta_1)^2} +\frac1{([n+1]_{q_n}+\beta_1)^2}\biggl{(}\alpha_1^2+%
\frac{2\alpha_1}{[2]_{q_n}}+\frac1{[3]_{q_n}}\biggl{)}\biggl{]}^{%
\frac{\alpha}{2}}.
\end{eqnarray*}

Replacing $x$ by $\frac{[n]_{q_n}+\alpha_2}{[n]_{q_n}+\beta_2}$ implies that
\begin{eqnarray*}
\bigl{\|}K^{(\alpha,\beta)}_{n,q_n}(f)-f\bigl{\|}&\leq& M \biggl{[}\frac{2q_n^2(2q_n+1)}{[2]_{q_n}[3]_{q_n}}\frac{[n]_{q_n}([n]_{q_n}+\alpha_2)}{([n+1]_{q_n}+\beta_1)^2}
   +\frac{q_n}{1+q_n}\left(\frac{3+5q_n+4q_n^2}{1+q_n+q_n^2}+4\alpha_1\right)\frac{[n]_{q_n}}{([n+1]_{q_n}+\beta_1)^2}\\
  &&-\frac{2}{1+q_n}\frac{(2q_n[n]_{q_n}+2\alpha_1+1)([n]_{q_n}+\alpha_2)}{([n+1]_{q_n}+\beta_1)([n]_{q_n}+\beta_2)}
  +\left(\frac{[n]_{q_n}+\alpha_2}{[n]_{q_n}+\beta_2}\right)^2+\left(\frac{1+\alpha_1}{[n+1]_{q_n}+\beta_1}\right)^2\biggl{]}^{\frac{\alpha}{2}}.
\end{eqnarray*}
Hence if we choose $\delta := \delta_n$, then we arrive at the desired
result.\newline
\end{proof}
\vspace{1cm}
\section{Graphical analysis}

With the help of Matlab, we show comparisons and some illustrative graphics
\cite{ma1} for the convergence of operators $(2.1)$ to the function $%
f(x)=1-cos(4 e^{x} )$ under different parameters.\newline

From figure $4.1,~4.2,~4.3,$ we can observe that as the value the $n$
increases, Kantorovich type $q$-Bernstein-Stancu operators given by $(2.1)$
converges towards the function.
\begin{figure*}[htb!]
\begin{center}
\includegraphics[height=6cm, width=10cm]{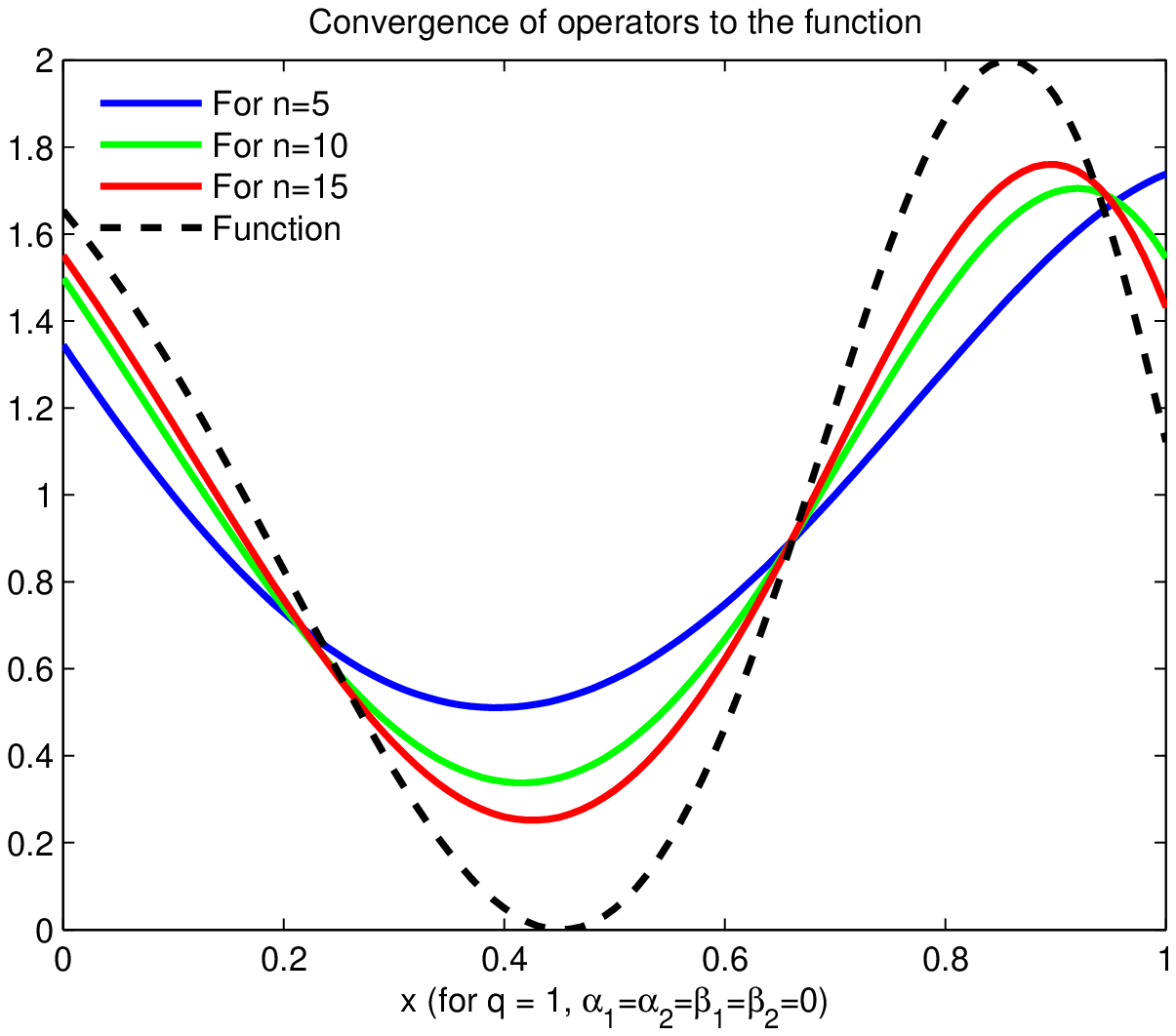}
\end{center}
\caption{}
\end{figure*}

\newpage

\begin{figure*}[htb!]
\begin{center}
\includegraphics[height=6cm, width=10cm]{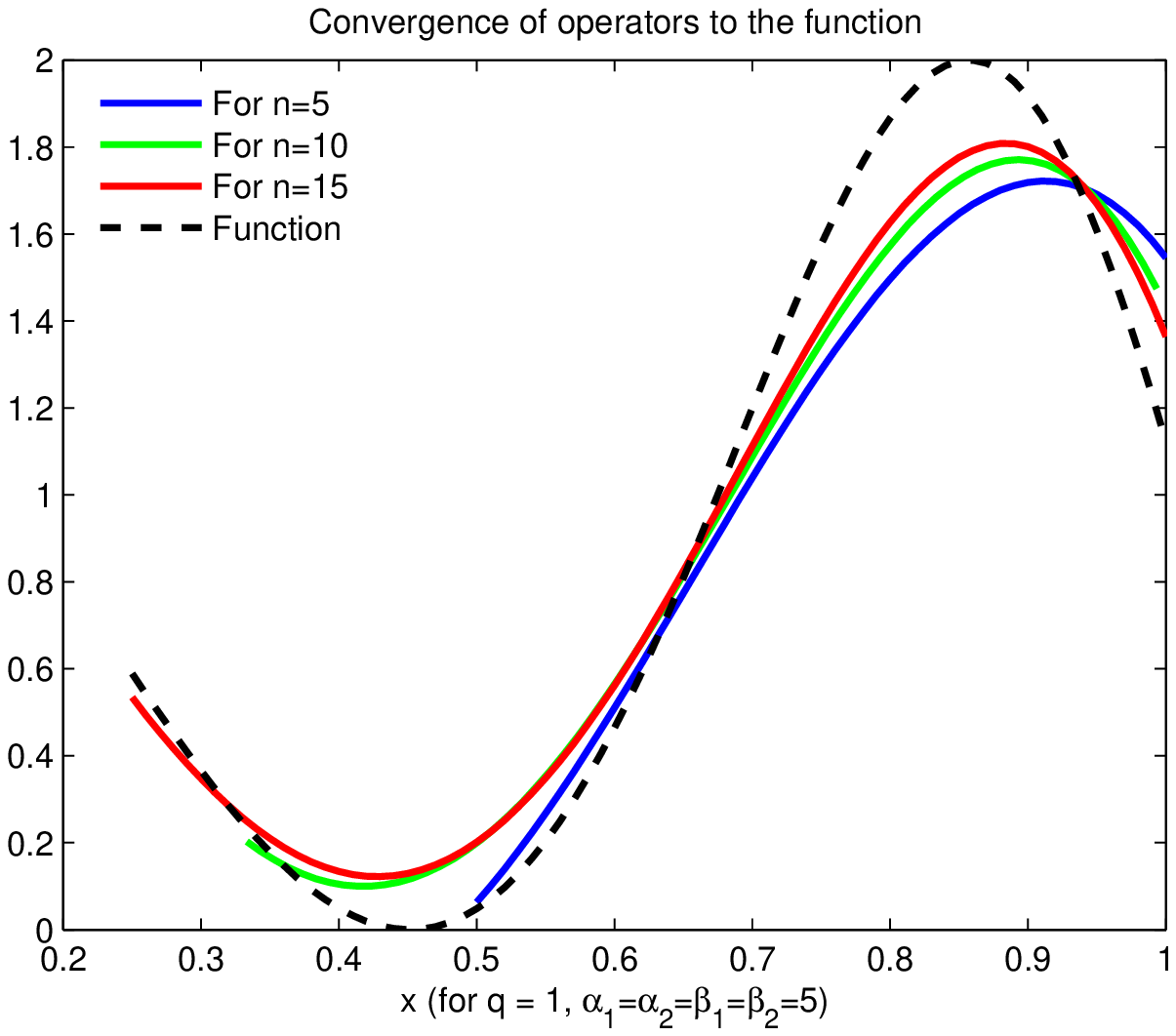}
\end{center}
\caption{}
\end{figure*}

\begin{figure*}[htb!]
\begin{center}
\includegraphics[height=6cm, width=10cm]{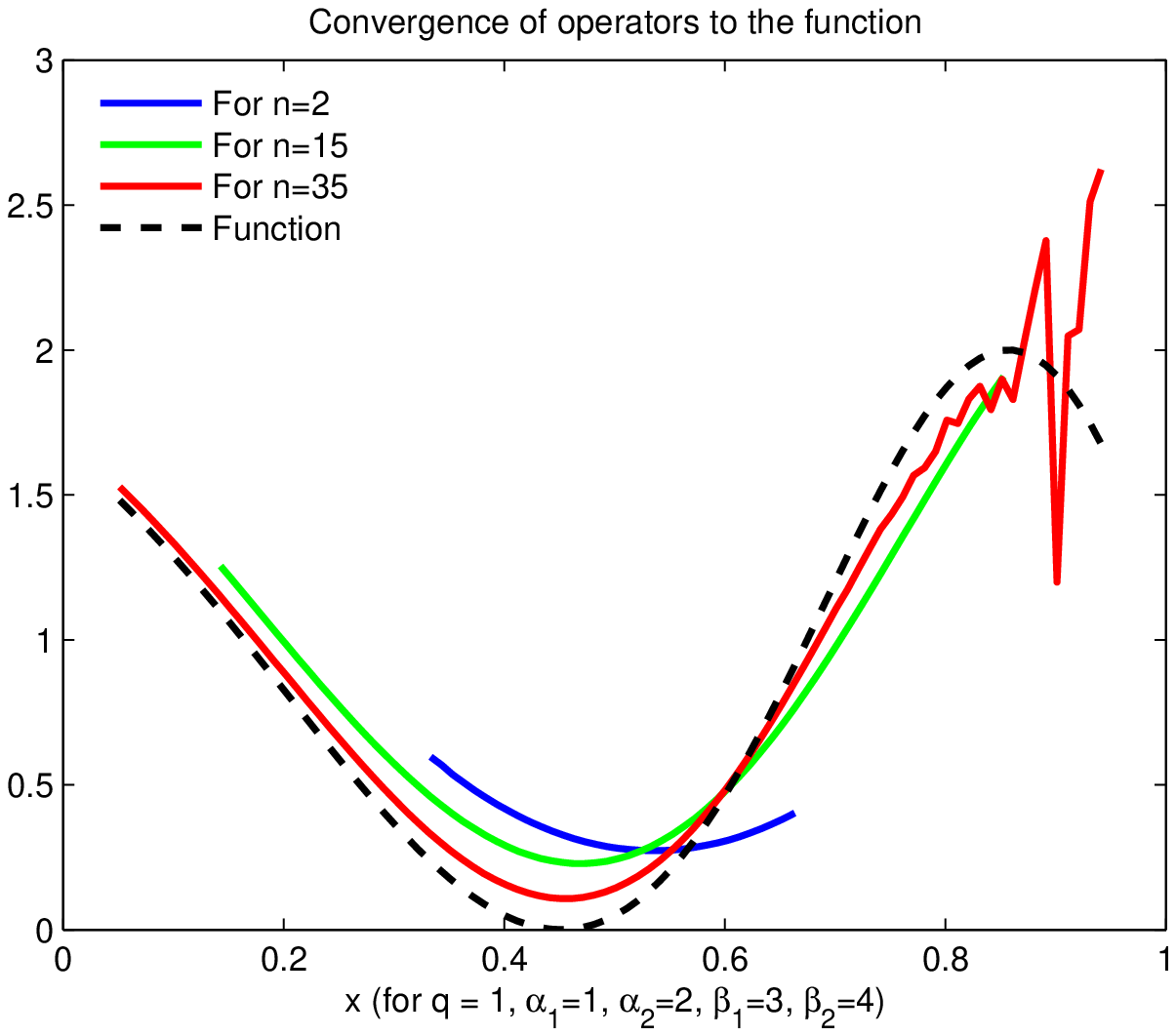}
\end{center}
\caption{}
\end{figure*}
Similarly as the value the $q$ increases, convergence of operators to the
function is shown in figure $4.4$ with different values of parameters $%
\alpha_1,~\alpha_2,~\beta_1,~\beta_2,~ \text{and}~ n$

\begin{figure*}[htb!]
\begin{center}
\includegraphics[height=6cm, width=10cm]{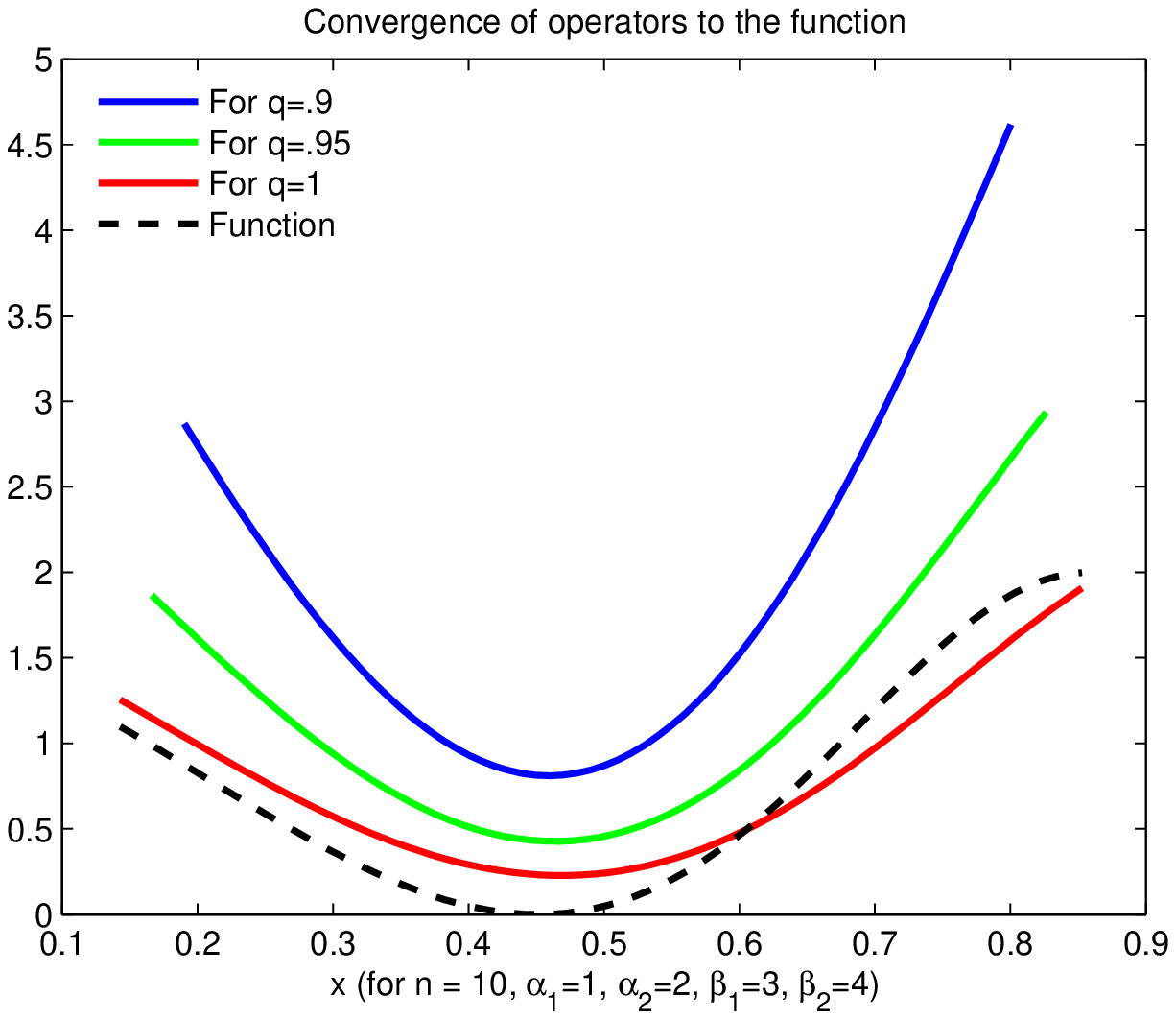}
\end{center}
\caption{}
\end{figure*}

\newpage

\newpage

\end{document}